\tikzset{corners/.style={fit={#1},rectangle,inner sep=0, outer sep = .2cm}}
\renewcommand{\tilde}{\widetilde}
\newcommand{\pp}{\mathbb P}
\newcommand{\OO}{\mathcal O}
\newcommand{\Diff}{\mathrm{Diff}}
\newcommand{\wt}{\widetilde}
\newcommand{\epsi}{\epsilon}
\newcommand{\inv}{^{-1}}
\newcommand{\I}{\mathrm{i}}
\title{Log-canonical pairs and Gorenstein stable surfaces with $K_X^2=1$}
\author{Marco Franciosi}
\address{Marco Franciosi\\Dipartimento di Matematica\\Universit\`a di Pisa \\Largo B. Pontecorvo 5\\I-56127  Pisa\\Italy}
\email{franciosi@dm.unipi.it}
\author{Rita Pardini}
\address{Rita Pardini\\Dipartimento di Matematica\\Universit\`a di Pisa \\Largo B. Pontecorvo 5\\I-56127  Pisa\\Italy}
\email{pardini@dm.unipi.it}
\author{S\"onke Rollenske}
\address{S\"onke Rollenske\\Fakult\"at f\"ur Mathematik\\Universit\"at Bielefeld\\Universit\"atsstr. 25\\33615 Bielefeld\\Germany}
\email{rollenske@math.uni-bielefeld.de}
\begin{document}
\begin{abstract}
We classify log-canonical pairs $(X, \Delta)$ of dimension two with $K_X+\Delta$ an ample Cartier divisor with $(K_X+\Delta)^2=1$, giving  some  applications to  stable surfaces with $K^2=1$.
 A rough classification is also given in the case  $\Delta=0$. 
\end{abstract}

\subjclass[2010]{14J10, 14J29}
\keywords{log-canonical pair, stable surface, geography of surfaces}

\maketitle

\setcounter{tocdepth}{1}
\tableofcontents
\section{Introduction}

The study of stable curves and, more generally, stable pointed curves is by now a classical subject. 
Stable surfaces were introduced by Koll\'ar and Shepherd-Barron in \cite{ksb88} and it was consequently realized (see, for instance,  \cite{alexeev06, kollar12,  KollarModuli}  and references therein)
that
this definition can be extended to higher-dimensional varieties and
pairs.
So the study of (semi-)log-canonical  pairs  became  an important topic in the  theory of  singular  higher-dimensional varieties.

Here  we consider two-dimensional log-canonical pairs    in which  the log-canonical divisor is Cartier and has self-intersection equal to 1, and we give
 some applications to Gorenstein stable surfaces. 
 
First we study  the case with non-empty boundary:
\begin{thm}\label{thm: pairs}  Let $(X, \Delta)$ be a    log-canonical  pair   of dimension 2 with   $\Delta>0$, $K_X+\Delta$ Cartier and ample  and $(K_X+\Delta)^2=1$.

Then $(X,\Delta)$ belongs to one of the types $(P)$, $(dP)$, $(E_+)$ or $(E_-)$ described in List~\ref{list}.
\end{thm}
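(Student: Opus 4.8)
The plan is to pass to the minimal resolution, control everything through the crepant pullback of $K_X+\Delta$, and then split into a ``Fano-type'' case and a ``pseudo-effective'' case. First I would take the minimal resolution $\epsilon\colon Y\to X$ and write the crepant pullback $K_Y+\Gamma=\epsilon^*(K_X+\Delta)$, where $\Gamma=\epsilon_*^{-1}\Delta+\sum_i b_iE_i$, the $E_i$ being the $\epsilon$-exceptional curves; since $(X,\Delta)$ is log-canonical the $b_i$ lie in $[0,1]$, the pair $(Y,\Gamma)$ is log-canonical, $L:=K_Y+\Gamma$ is nef and big with $L^2=(K_X+\Delta)^2=1$, the curves contracted by $\epsilon$ are exactly those with $L\cdot C=0$, and $X$ is the log-canonical model of $(Y,\Gamma)$. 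Thus it suffices to classify the smooth log-canonical pairs $(Y,\Gamma)$ of this shape together with the extra requirement that their log-canonical divisor be Cartier, and then read off $(X,\Delta)$ and match it with List~\ref{list}. When log smoothness of $(Y,\Gamma)$ is needed I would blow up further and, conversely, contract any $(-1)$-curve $C$ with $L\cdot C=0$; since $L\cdot C=0$ forces $\Gamma\cdot C=1$ by adjunction, all of the above is preserved.

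Next I would pin down the dichotomy. Let $\sigma\colon Y\to Y_{\mathrm{min}}$ be a minimal model. If $Y$ were of general type, then $H$, the pullback to $Y$ of the ample canonical class of the canonical model of $Y$, is nef and big, $L-H=(\text{$\sigma$-exceptional part})+\Gamma$ is effective, and expanding $1=L^2=H^2+2H\cdot(L-H)+(L-H)^2$ together with the algebraic index theorem forces $L-H\equiv 0$, hence $\Gamma=0$, contradicting $\Delta>0$. Therefore $\kappa(Y)\le 1$, and the argument now divides according to whether $K_Y$ is pseudo-effective.

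If $K_Y$ is not pseudo-effective then $Y$ is uniruled, and I would run a $K_Y$-minimal model program while tracking $\Gamma$; the end product is $\pp^2$ or a $\pp^1$-bundle over a curve, and, chasing $L$ and $\Gamma$ back, $X$ turns out to be a Gorenstein del Pezzo surface (with $\pp^2$ as a special case) or to carry a fibration in rational curves. Using that $L\cdot F$ and $L\cdot\Gamma_j$ are positive integers for a fibre $F$ and for the components $\Gamma_j$ of $\Gamma$, and that $L^2=1$, adjunction on $F$, on sections/lines of minimal degree and on each $\Gamma_j$ pins these numbers down to their smallest admissible values; combined with the Cartier hypothesis this produces exactly the pairs of types $(P)$ and $(dP)$. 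If $K_Y$ is pseudo-effective then $\kappa(Y)\in\{0,1\}$; writing $1=L^2=(K_Y+\Gamma)^2$ and applying the Hodge index theorem against the ample $K_X+\Delta$ bounds $K_Y^2$, $K_Y\cdot\Gamma$ and $\Gamma^2$ to a short list, in each item of which the minimal model of $Y$ is shown to carry an elliptic fibration (or elliptic multiple fibre) whose elliptic configuration appears either inside $\Gamma$ or as a rational multiple of $K_Y$. Distinguishing the two possibilities according to the sign of $K_X\cdot(K_X+\Delta)$ --- equivalently, the positivity of the relevant elliptic curve on $Y$ --- yields the types $(E_+)$ and $(E_-)$, while the genuinely Kodaira-zero configurations are ruled out because for them $K_X+\Delta$ is not Cartier. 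Putting the two cases together proves the theorem.

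The hard part, and where the bulk of the work sits, is twofold. First, log-canonical surface singularities need not be rational --- cusps, simple-elliptic singularities and their degenerations all occur --- so Kodaira vanishing and Riemann--Roch on $X$ are unavailable; every positivity statement and every Euler-characteristic computation must be made upstairs on $Y$, while carefully tracking how the non-rational singularities interact both with the boundary and with the demand that $K_X+\Delta$ be Cartier. Second, once the numerical reductions above are in place there still remains a finite but long list of dual graphs for the $L$-trivial contracted configuration and for $\Gamma$, together with the possibility that $\Delta$ is non-reduced; disposing of these or sorting them into the four families requires a patient combinatorial analysis of chains and cycles of curves subject to $L\cdot C=0$ and $C^2\le-1$, and it is precisely here that the smallness $(K_X+\Delta)^2=1$ is decisive.
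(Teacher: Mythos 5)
Your reduction to the minimal resolution and the exclusion of general type are fine, but the skeleton of your case division contains a genuine error. The decisive observation, which you miss, is that $\wt L\cdot\wt\Delta=(K_X+\Delta)\cdot\Delta=2p_a(\Delta)-2$ is a positive \emph{even} integer (adjunction, using that $K_X+\Delta$ is Cartier and $\Delta>0$), hence $K_{\wt X}\cdot\wt L=\wt L^2-\wt L\cdot\wt\Delta\le 1-2<0$. Since $\wt L$ is nef and big, this forces $\kappa(\wt X)=-\infty$: the branch of your argument in which $K_Y$ is pseudo-effective is empty, so there are no Kodaira-dimension $0$ or $1$ cases to analyse, no elliptic multiple fibres, and nothing for the Cartier hypothesis to rule out there. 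More seriously, your sorting of the targets is wrong: the pairs of types $(E_-)$ and $(E_+)$ live on ruled surfaces over an elliptic curve ($\pp(\OO_E\oplus\OO_E(-x))$ with the $(-1)$-section contracted, and $S^2E=\pp(\ke)$), i.e.\ they belong precisely to your uniruled branch, which you claim yields only $(P)$ and $(dP)$. As written, your MMP-with-boundary analysis would ``conclude'' a classification missing two of the four types, while the branch you expect to produce them cannot occur; the proposal cannot be completed without restructuring the dichotomy.

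For comparison, the paper never runs an MMP. After noting $K_{\wt X}\cdot\wt L<0$ (so $\wt X$ is ruled and $\chi(\wt X)\le 1$), it bounds $h^0(\wt L)\le 3$ by restricting to a curve in $|L|$, identifies $h^0(\wt L)=3$ with $(\pp^2,\OO(1))$, and then applies Riemann--Roch plus Kawamata--Viehweg vanishing to the adjoint divisor $K_{\wt X}+\wt L$ to split the remaining cases by $\chi(\wt X)$: $\chi(\wt X)=1$ leads, via the elliptic pencil $|\wt L|$ and the index theorem, to $\wt L=-K_{\wt X}$ and type $(dP)$; $\chi(\wt X)=0$ leads to $\wt X=\pp(\ke)$ with $\ke$ of rank $2$ and degree $1$ on an elliptic curve, and the dichotomy decomposable/indecomposable $\ke$ (equivalently $h^0(\wt L)=2$ or $1$) gives $(E_-)$ and $(E_+)$. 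If you want to salvage your approach, the repair is to insert the parity computation above at the start and then carry out the classification entirely within the ruled case, distinguishing rational base (leading to $(P)$, $(dP)$) from elliptic base (leading to $(E_\pm)$).
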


In particular,   Theorem \ref{thm: pairs} implies that $X$ is either the projective plane, a del Pezzo surface of degree 1, the symmetric product $S^2E$ of an elliptic curve, or a projective bundle $\IP(\ko_E\oplus\ko_E(x))$ over an elliptic curve with the section of square $-1$ contracted.
It came rather as a suprise to us that the list is so short and that in each case the underlying surface itself is Gorenstein.

The case in which $\Delta=0$ cannot be described so precisely, since it includes, for instance, all smooth surfaces of general type with $K^2=1$; however in  Section \ref{section: bigandnef} we  give a rough classification, according to the Kodaira dimension of a smooth model of $X$ (see Theorem \ref{thm:normal-case}). 

Although log-canonical pairs are interesting in their own right, our  main motivation for proving  the above results is that,  by a result of Koll\'ar,  a non-normal Gorenstein stable surface gives rise to a pair as in  Theorem \ref{thm: pairs} via normalisation (see Corollary \ref{cor: main motivation}).  In Section \ref{section: applications to moduli}, we explain how the above pairs can be used to construct stable surfaces and which pairs can occur as normalisations of stable surfaces  for given invariants $K^2 $ and $\chi$.
 In particular, we show that $\chi(X)\geq0$ for a Gorenstein stable surface $X$  with $K_X^2=1$ improving upon results in \cite{liu-rollenske13}.

 We will study the geometry and moduli of Gorenstein stable surfaces with $K^2=1$ more in detail in a subsequent paper, building on the classification results proven here.

\subsection*{Acknowledgements}
We are grateful to 
 Wenfei Liu
for many discussions on stable surfaces and related topics and to Valery Alexeev for some useful communications. 

 The first author  is a member of GNSAGA of INDAM. The third author is grateful for support of the DFG through the Emmy-Noether program and SFB 701; he enjoyed the hospitality of HIM in Bonn during the final preparation of this paper. The collaboration  benefited immensely from a visit of the third author in Pisa supported by GNSAGA of INDAM. This project was partially supported by PRIN 2010 ``Geometria delle Variet\`a Algebriche'' of italian MIUR. 
 
\subsection*{Notation and conventions. } We work over the complex numbers; all varieties are assumed to be projective and irreducible unless otherwise stated. We do not distinguish between Cartier divisors and invertible sheaves in our notation. For a variety $X$ we denote by $\chi(X)$ the holomorphic Euler-characteristic and by $K_X$ a canonical divisor.

\section{Classification of pairs}\label{sec:pairs}
 Let $(X, \Delta)$ be a log-canonical (lc) pair of dimension two (cf.  \cite[Def. 2.34]{Kollar-Mori} for the definition). 
\begin{defin}
   We call $(X, \Delta) $ stable if $K_X+\Delta$ is ample and Gorenstein if $K_X+\Delta$ is Cartier.
\end{defin}

The aim of this section is the classification of  Gorenstein stable lc pairs  with $(K_X+\Delta)^2=1$ and $\Delta> 0 $. We start by listing and  describing quickly the cases that occur in our classification.

\begin{List}\label{list}~
\begin{itemize}
 \item[$(P)$] $X=\pp^2$ and $\Delta $ is a  nodal quartic. Here $p_a(\Delta)=3$ and  $K_X+\Delta=\OO_{\pp^2}(1)$.  
 
 \item[$(dP)$] $X$ is a (possibly singular) Del Pezzo surface of degree $1$, namely $X$ has at most canonical singularities,  $-K_X$ is ample and $K^2_X=1$.  The curve  $\Delta$ belongs to the system $|-2K_{X}|$, hence  $K_X+\Delta=-K_X$ and $p_a(\Delta)=2$. 
 \item[$(E_-)$]  Let $E$ be an elliptic curve and let  $a\colon \wt X\to E$ be a geometrically ruled surface that contains an irreducible section  $C_0$ with $C_0^2=-1$. Namely, $\tilde X=\pp(\OO_E\oplus \OO_E(-x))$, where $x\in E$ is a point  and $C_0$ is the only curve in the system $|\OO_X(1)|$.  Set $F=a\inv(x)$: the normal surface   $X$ is obtained  from $\wt X$ by contracting $C_0$ to an elliptic Gorenstein singularity of degree 1 and $\Delta$ is the image of a curve $\Delta_0\in |2(C_0+F)|$ disjoint from $C_0$, so $p_a(\Delta)=2$. The line bundle $K_X+\Delta$ pulls back  to $C_0+F$ on $\wt X$. 
  \item[$(E_+)$]
    $X=S^2E$, where $E$ is an elliptic curve. Let $a\colon X\to E$ be the Albanese map, which is induced by the addition map $E\times E \to E$, denote by $F$ the class of a fiber of $a$ and by $C_0$ the image in $X$ of the  curve  $\{0\}\times E+E\times \{0\}$, where $0\in E$ is the origin, so that $C_0 F=C_0^2=1$. Then $\Delta$ is a divisor numerically equivalent to $3C_0-F$, $p_a(\Delta)=2$  and $K_X+\Delta$ is numerically equivalent to $C_0$. 
            
An equivalent description of $X$ is as follows (cf. \cite[\S1]{catanese-ciliberto93}). Denote by $\ke$ the only indecomposable extension of the form  $0\to \OO_E\to \ke\to \OO_E(0)\to 0$ and  set  $X=\pp(\ke)$: then  $C_0$ is the only effective divisor in $|\OO_X(1)|$.
\end{itemize}
\end{List}
For completeness,  we give in Table \ref{tab: invariants pairs} the numerical  invariants of the four  possible cases.
\begin{table}[htb!]\caption{Invariants of $( X, \Delta)$}\label{tab: invariants pairs}
 \begin{center}
 \begin{tabular}{c ccc c}
 \toprule
Case & $\chi({X})$ & $q(X)$ & $p_a(\Delta)$ & $h^0(K_{X}+\Delta)$ \\
\midrule
$(P)$ 	& 1 	& 0	& 3	& 3\\
$(dP)$ 	& 1 	& 0	& 2	& 2\\
$(E_-)$	& 1 	& 0	& 2	& 2\\
$(E_+)$	& 0 	& 1	& 2	& 1\\
\bottomrule
\end{tabular}
\end{center}
\end{table}
The rest of the section is devoted to proving Theorem \ref{thm: pairs}. We start with  some general   remarks: 
\begin{lem}\label{lem: h^0>=3} Let $X$ be a normal surface and let $L$ be an ample line bundle of $X$ such that $L^2=1$. Then:
\begin{enumerate}
\item every curve $C\in|L|$ is irreducible and $h^0(L)\le 3$
\item   $h^0(L)=3$ if and only if $X=\pp^2$ and $L=\OO_{\pp^2}(1)$
\item if $h^0(L)=2$, then the system $|L|$ has one simple base point $P$ that is smooth for $X$. 
\end{enumerate}
\end{lem}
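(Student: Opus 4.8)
The plan is to exploit the fact that $L^2 = 1$ forces any two curves in $|L|$ to meet in a single point (with multiplicity one), so that the linear system behaves almost like a pencil of lines. First I would prove (1): if $C \in |L|$ were reducible, say $C = C_1 + C_2$ with $C_i$ effective nonzero, then ampleness of $L$ gives $L \cdot C_i \geq 1$ for each $i$, hence $1 = L^2 = L\cdot C_1 + L \cdot C_2 \geq 2$, a contradiction; so every member is irreducible. For the bound $h^0(L) \leq 3$, I would argue that if $h^0(L) \geq 2$ the system $|L|$ is not composed with a pencil (its general member is irreducible), so by the standard inequality for linear systems on surfaces — or by blowing up the base locus and using that the moving part $M$ of the pulled-back system satisfies $M^2 \geq \dim|L| - 1$ together with $M^2 \leq L^2 = 1$ — one gets $\dim|L| \le 2$, i.e. $h^0(L)\le 3$. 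One has to be slightly careful because $X$ is only normal, not smooth; but after passing to a resolution $\pi\colon \tilde X \to X$ and pulling back $|L|$, the relevant self-intersection inequalities still hold, and $(\pi^*L)^2 = L^2 = 1$.

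For (2), the "if" direction is immediate since $h^0(\OO_{\pp^2}(1)) = 3$. For "only if", suppose $h^0(L) = 3$. Then by the argument above the base locus of $|L|$ must be empty — otherwise the moving part $M$ on the resolution would satisfy $M^2 \leq 1 - (\text{positive base contribution}) \leq 0$, incompatible with $\dim|M| = 2$ and $M$ moving. Thus $|L|$ is base-point-free and defines a morphism $\varphi\colon X \to \pp^2$; since $L^2 = 1$, $\varphi$ has degree $1$, hence is birational. A birational morphism from a normal surface to $\pp^2$ which is finite (it is, since $L$ is ample, so no curve is contracted) must be an isomorphism, so $X \cong \pp^2$ and $L = \OO_{\pp^2}(1)$. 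The key point to get right here is ruling out that $\varphi$ contracts a curve: $L$ ample means $L \cdot C > 0$ for every curve $C$, and $\varphi^*\OO_{\pp^2}(1) = L$, so no curve is contracted, and a finite birational morphism to a normal (in fact smooth) variety is an isomorphism.

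For (3), assume $h^0(L) = 2$, so $|L|$ is a pencil with irreducible general member and (by (2)) nonempty base locus $\mathrm{Bs}|L|$. Since any two general members meet only at points of the base locus and $C_1 \cdot C_2 = L^2 = 1$, the base locus consists of a single point $P$ and the local intersection multiplicity there is $1$; in particular two general members meet transversally at $P$, which forces $P$ to be a smooth point of $X$ and a simple (reduced) base point. To make this precise I would resolve the indeterminacy of the pencil: one blow-up at $P$ (if $X$ is smooth there, or the minimal sequence needed over a singular point) produces $\tilde X \to X$ with the strict transform pencil $\tilde L$ having $\tilde L^2 = 1 - (\text{number/weight of infinitely near base points}) \geq 0$, and since $\tilde L$ is still a pencil without fixed part, $\tilde L^2 \geq 0$ with equality iff it is base-point-free; the only way to subtract a positive amount from $1$ and stay $\geq 0$ is to subtract exactly $1$ via a single reduced point $P$ lying on the smooth locus. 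The main obstacle in the whole lemma is handling the normal (possibly singular) surface cleanly in parts (2) and (3): one must make sure the self-intersection bookkeeping on a resolution is done correctly and that the base point, if it lay on a singular point of $X$, would force $\tilde L^2 < 0$; I expect this case analysis over singular points to be the fiddliest part, though it is ultimately routine.
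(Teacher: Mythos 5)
Your argument is correct in substance, but it reaches the key bound and part (2) by a genuinely different mechanism than the paper. The paper never leaves $X$: it picks a member $C\in|L|$, notes $\deg L|_C=1$ so that $h^0(\nu^*L)\le 2$ on the normalization $\nu\colon \wt C\to C$ (with equality only if $\wt C\cong\pp^1$), and feeds this into the restriction sequence $0\to\OO_X\to L\to L|_C\to 0$ to get $h^0(L)\le 3$; in the equality case it obtains for free that $C$ is a smooth rational curve and that $|L|$ is base point free, and then finishes exactly as you do (a degree-one finite morphism onto $\pp^2$ is an isomorphism). Your route --- pulling back to a resolution and using the classical inequality $M^2\ge\dim|L|-1$ for the moving part of a system not composed with a pencil, against $M^2\le(\epsi^*L)^2=1$ --- is also valid, but it shifts all the singular-surface bookkeeping onto the resolution, and this is where your sketch is thinnest: in (2) your ``positive base contribution'' is not automatic, because the fixed part $F$ of $|\epsi^*L|$ could a priori be supported on $\epsi$-exceptional curves with $\epsi^*L\cdot F=0$; one rules this out by negative definiteness of the exceptional locus ($F>0$ exceptional gives $F^2<0$, hence $M\cdot F>0$, hence $M^2\le 0$), a step you flag but do not carry out, whereas the paper's restriction argument avoids the issue entirely. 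Two small further points: in (3) the base locus is nonempty because $L^2=1>0$ forces two members of the pencil to meet (not because of (2)), and the smoothness of $X$ at $P$ is precisely the fact that two Cartier curves through a singular point of a normal surface meet with local multiplicity at least $2$ --- the paper leaves this implicit (``follows by (i) and $L^2=1$''), and your blow-up bookkeeping is one legitimate way to justify it.
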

\begin{proof}  \refenum{i}, \refenum{ii}
We have $LC=1$, hence $C$ is irreducible, since $L$ is ample. 
Denote by $\nu\colon \wt C \to C$ the normalization: since $\deg L|_C=1$, one has $h^0(\nu^*L)\le 2$, with equality holding iff $\wt C$ is a smooth rational curve. Since $h^0(L|_C)\le h^0(\nu^*L)$, the usual  restriction sequence 
$$0\to \OO_X\to \OO_X(C)=L\to L|_C\to 0$$
 gives $h^0(L)\le 3$. Moreover, if $h^0(L)=3$ then $h^0(L|_C)=h^0(\nu^*L)=2$,  $C$ is a smooth rational curve and the system $|L|$ is base point free. The   morphism $X\to \pp^2$ defined by $|L|$ has degree 1 and is finite, since $L$ is ample, so it is an isomorphism.

\refenum{iii} Follows by \refenum{i} and by the fact that $L^2=1$.
\end{proof}

\begin{lem}\label{lem:Dred}
Let $Y$ be a smooth surface, let $D>0$ be a nef and big divisor of $Y$ and let $D\red$ be the underlying reduced divisor.
Then:
\begin{enumerate}
\item $p_a(D\red)\le p_a(D)$
\item the natural  map $\Pic^0(Y)\to \Pic^0(D\red)$ is injective.
\end{enumerate}
\end{lem}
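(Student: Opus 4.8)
For part (1), the plan is to induct on the number of irreducible components of $D$ that appear with multiplicity $\geq 2$, or more simply to peel off one component at a time. Write $D = D\red + R$ where $R \geq 0$ is the "excess" divisor. It suffices to treat the case where $D' = D - C$ for a single prime component $C \subseteq \Supp(D)$ and show $p_a(D') \leq p_a(D)$ whenever $D'$ is still effective and nonzero; iterating this from $D$ down to $D\red$ gives the claim, provided each intermediate divisor is nef (which we can arrange, or we can instead argue directly that $p_a(D\red) \le p_a(D)$ in one step using adjunction). Concretely, adjunction gives $2p_a(D) - 2 = D(D+K_Y)$ and $2p_a(D\red) - 2 = D\red(D\red + K_Y)$, so the inequality $p_a(D\red) \le p_a(D)$ is equivalent to
\[
D\red(D\red + K_Y) \le D(D + K_Y).
\]
Writing $D = D\red + R$ with $R \ge 0$, the right-hand side minus the left-hand side equals $R(D + D\red + K_Y) = R\,D\red + R(D + K_Y)$. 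Now $R\, D\red \ge 0$ since both are effective with $R$ supported on $\Supp(D\red)$, but one must be careful about negative self-intersections of components of $R$; the cleaner route is $R(D+K_Y) = R(D + K_Y)$, and since $D$ is nef and big one has $R\cdot D \ge 0$, while $R\cdot K_Y$ need not have a sign. So instead I would use the standard fact that for a nef and big $D$ on a smooth surface, each connected component of $\Supp(D)$ supports an effective divisor, and compare $p_a$ of nested big-and-nef divisors: this is exactly the statement that removing a reduced component of a big-and-nef divisor does not increase $p_a$, which follows from $p_a(A+B) = p_a(A) + p_a(B) + A\cdot B - 1 \ge p_a(A)$ when $B$ is a nonzero effective divisor with $p_a(B) \ge 0$ and $A \cdot B \ge 1$; here connectedness of the support of a big-and-nef divisor guarantees $A \cdot B \ge 1$ at each step.

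For part (2), I would use the exponential sequence and Hodge theory. The map $\Pic^0(Y) \to \Pic^0(D\red)$ sits in the commutative diagram coming from restriction $\OO_Y \to \OO_{D\red}$; its kernel is controlled by $H^1(Y, \mathcal I_{D\red})$, or more precisely by the map $H^1(Y,\OO_Y) \to H^1(D\red, \OO_{D\red})$ on the $\Pic^0$ level. So it suffices to show $H^1(Y, \OO_Y) \to H^1(D\red, \OO_{D\red})$ is injective, equivalently that $H^1(Y, \OO_Y(-D\red)) \to H^1(Y,\OO_Y)$ is zero, equivalently — by Serre duality — that $H^1(Y, \OO_Y(K_Y)) \to H^1(Y, \OO_Y(K_Y + D\red))$ is surjective, i.e. that $H^2(Y, \OO_Y(-D\red)) \to H^2(Y,\OO_Y)$ is injective. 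Since $D\red$ is big and nef (it has the same support as $D$, hence is big, and while $D\red$ itself need not be nef, it is big, so $D\red = $ effective big divisor), I would invoke Kawamata–Viehweg vanishing or the Grauert–Riemenschneider-type argument: for an effective divisor $D\red$ whose support is that of a big and nef divisor, $H^1(Y, \OO_Y(-D\red)) = 0$, which immediately gives injectivity on $H^1$ and hence on $\Pic^0$. Alternatively, and perhaps most transparently, I would argue that $\Pic^0(Y) = \Alb(Y)^\vee$ acts without fixed points on the image: a line bundle $\mathcal L \in \Pic^0(Y)$ in the kernel restricts trivially to every component $C_i$ of $D\red$ and to their intersections, and since $\Supp(D\red)$ is connected and its dual graph together with the normalizations of the $C_i$ "sees" enough of $H_1(Y)$ — because $D$ is big, the classes of the $C_i$ span a subgroup of $\mathrm{NS}(Y)$ of full rank transverse to... — this is where I would instead simply cite Kawamata–Viehweg.

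The main obstacle I anticipate is part (2): handling the non-normality and possible disconnectedness-at-the-level-of-components of $D\red$, and making sure the relevant vanishing theorem applies to $D\red$ rather than to $D$ itself. The safe path is to observe that $D\red$ and $D$ have the same support, that big-and-nefness of $D$ makes $\Supp(D)$ connected and makes $D\red$ a big divisor, and then to apply the logarithmic/Grauert–Riemenschneider vanishing $H^i(Y, \OO_Y(-D\red)) = 0$ for $i < 2$ — or, failing a clean citation, to run the argument component-by-component using that the restriction maps $H^1(Y,\OO_Y) \to H^1(C_i, \OO_{C_i})$ have intersecting "null directions only in $\Pic^0$ of the configuration" controlled by the fact that $D$ is big. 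Part (1) I expect to be routine once the right formulation of the genus formula for reducible divisors is in hand.
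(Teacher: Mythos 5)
Both parts of your proposal have genuine gaps.

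In part (1), the peeling argument does not work as stated. The inequality $p_a(A+B)\ge p_a(A)$ requires $p_a(B)+A\cdot B\ge 1$, and connectedness of $\Supp(D)$ does \emph{not} give $A\cdot B\ge 1$ when $B$ is a single prime component and $A$ is one of your intermediate divisors: those intermediate divisors are no longer nef, and $A$ still contains $B$ (possibly with multiplicity) while $B^2$ may be negative, so $A\cdot B$ can be $0$ or negative. Concretely, let $Y$ be a rational elliptic surface with a section $\sigma$ ($\sigma^2=-1$) and an $I_2$-fibre $F=B_1+B_2$ with $B_i^2=-2$, $B_1B_2=2$, $\sigma B_1=1$, $\sigma B_2=0$, and take $D=2B_1+2B_2+\sigma=2F+\sigma$, which is nef and big. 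Peeling $B_1$ and then $B_2$ you pass through $D_1=B_1+2B_2+\sigma$, and one computes $p_a(D)=2$, $p_a(D_1)=0$, $p_a(D\red)=1$: at the second step $(B_1+B_2+\sigma)\cdot B_2=0$ and $p_a$ goes \emph{up}. So the monotonicity you invoke fails step by step (even though the final inequality is true), and your argument provides no proof that a good ordering of the peeling exists; your one-step adjunction identity stalls for the reasons you yourself note. The paper's proof avoids intersection bookkeeping entirely: Kawamata--Viehweg gives $h^1(K_Y+D)=0$, hence $p_a(D)=h^0(K_Y+D)-\chi(K_Y)+1$, while for the reduced divisor only $h^2(K_Y+D\red)=h^0(-D\red)=0$ is needed to get $p_a(D\red)\le h^0(K_Y+D\red)-\chi(K_Y)+1$; then $h^0(K_Y+D\red)\le h^0(K_Y+D)$ concludes.

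In part (2) there are two problems. First, the vanishing $H^1(Y,\OO_Y(-D\red))=0$ is precisely the crux, and neither Kawamata--Viehweg nor Grauert--Riemenschneider applies: $D\red$ is big but in general not nef, and ``having the support of a nef and big divisor'' is not a hypothesis of either theorem. Indeed, since $D\red$ is reduced and connected, the restriction sequence shows that $H^1(Y,\OO_Y(-D\red))=0$ is \emph{equivalent} to the injectivity of $H^1(Y,\OO_Y)\to H^1(D\red,\OO_{D\red})$ you want, so as written the argument is circular. Second, even granting injectivity on $H^1(\OO)$, you have only shown that the kernel of $\Pic^0(Y)\to \Pic^0(D\red)$ has trivial tangent space, i.e.\ is finite: a nontrivial torsion line bundle restricting trivially to $D\red$ is not excluded by a tangent-space computation. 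A correct argument must treat an arbitrary $L\in\Pic^0(Y)$ in the kernel directly, e.g.\ by lifting the trivializing section through $0\to L(-D\red)\to L\to L|_{D\red}\to 0$, which requires a Mumford--Ramanujam type twisted vanishing $H^1(Y,L(-D\red))=0$ for the \emph{reduced} divisor --- exactly the step your write-up does not supply. (The paper itself disposes of (2) by citing \cite[Prop.~1.6]{CFMM} and asserting the same argument extends to $D\red$, so a self-contained proof would have to carry out that extension; but in any case the route you propose both misapplies the vanishing theorems and, even if repaired, would prove only finiteness of the kernel.)
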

\begin{proof}
\refenum{i}
One has $h^1(K_Y+D)=0$ by Kawamata-Viehweg's vanishing, thus taking cohomology in the usual restriction sequence $0\to K_Y\to K_Y+D\to K_D\to 0$ one obtains 
\[p_a(D)=\chi(K_D)+1=\chi(K_Y+D)-\chi(K_Y)+1=h^0(K_Y+D)-\chi(K_Y)+1\]
Applying the same argument   to $D\red$ one obtains instead the  inequality:
\[ p_a(D\red)\le h^0(K_Y+D\red)-\chi(K_Y)+1,\]
since $h^2(K_Y+D\red)=h^0(-D\red)=0$.
Then  the claim follows since $h^0(K_Y+D\red)\le h^0(K_Y+D)$.
\smallskip

\refenum{ii} This is a slight generalization of  \cite[Prop. 1.6]{CFMM} and can be proven  exactly by the same argument.

\end{proof}
Next we fix the notation and  the assumptions that we keep throughout  the rest of the section: $(X,\Delta)$ is an  lc pair satisfying the assumptions of Theorem \ref{thm: pairs} and $\epsi\colon \wt X\to X$ is the minimal desingularization. We set $L:=K_X+\Delta$,  and $\wt L:=\epsi^*L$;  $\wt L$ is a nef and big divisor with $\wt L^2=1$ and $h^0(L)=h^0(\wt L)$. We define the divisor $\wt \Delta$ by the equality $\wt L=K_{\wt X}+\wt \Delta$ and by requiring that $\epsilon_*\tilde \Delta = \Delta$. 

\begin{lem}\label{lem:h0} In the above set-up:
\begin{enumerate}
\item $K_{\wt X}\wt L<0$,   $h^2(\wt L)=0$
\item $\wt X$ is ruled. 
\end{enumerate}
\end{lem}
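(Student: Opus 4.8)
The plan is to extract both statements from the single numerical identity $K_{\wt X}\cdot\wt L=1-\Delta\cdot L$. To obtain it, I would apply the projection formula to $\epsi\colon\wt X\to X$ and the Cartier divisor $\wt L=\epsi^*L$: since $\epsi_*K_{\wt X}$ represents $K_X$, this gives $K_{\wt X}\cdot\wt L=(\epsi_*K_{\wt X})\cdot L=K_X\cdot L$, and using $L=K_X+\Delta$ together with $L^2=1$ the right-hand side equals $1-\Delta\cdot L$. Because $L$ is ample and $\Delta>0$ is effective, $\Delta\cdot L>0$; and because $K_{\wt X}\cdot\wt L$ is an integer, $\Delta\cdot L$ is in fact a \emph{positive integer}, so already $K_{\wt X}\cdot\wt L\le 0$.

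The one point that genuinely needs an argument is upgrading this to the strict inequality in (1), and for that I would use a parity argument with Riemann--Roch on the smooth surface $\wt X$. If $K_{\wt X}\cdot\wt L=0$, then
\[\chi(\wt L)=\chi(\OO_{\wt X})+\tfrac12\,\wt L\cdot(\wt L-K_{\wt X})=\chi(\OO_{\wt X})+\tfrac12,\]
which is not an integer: a contradiction. Hence $\Delta\cdot L\ge 2$ and $K_{\wt X}\cdot\wt L\le -1<0$. The vanishing $h^2(\wt L)=0$ then drops out of Serre duality, $h^2(\wt L)=h^0(K_{\wt X}-\wt L)$: since $\wt L$ is nef and $(K_{\wt X}-\wt L)\cdot\wt L=K_{\wt X}\cdot\wt L-1<0$, the class $K_{\wt X}-\wt L$ cannot be linearly equivalent to an effective divisor, so $h^0(K_{\wt X}-\wt L)=0$.

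For (2) I would argue by contradiction: if $\wt X$ were not ruled, the Enriques--Kodaira classification forces $\kappa(\wt X)\ge 0$, so $mK_{\wt X}$ is linearly equivalent to an effective divisor for some $m>0$; intersecting with the nef divisor $\wt L$ gives $K_{\wt X}\cdot\wt L\ge 0$, contradicting (1). Hence $\wt X$ is ruled. Apart from the Riemann--Roch parity trick, the whole proof is just the projection formula, Serre duality and surface classification, so I do not expect a real obstacle; the only subtlety to keep in mind is that $\Delta$ is a priori only a $\mathbb Q$-divisor, so the integrality of $\Delta\cdot L$ has to be deduced from that of $K_{\wt X}\cdot\wt L$ rather than taken for granted.
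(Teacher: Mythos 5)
Your proposal is correct and takes essentially the same route as the paper: the projection formula to compute $K_{\wt X}\cdot\wt L=1-L\cdot\Delta$, Serre duality plus nefness of $\wt L$ for $h^2(\wt L)=0$, and nefness against a pluricanonical divisor to force $\kappa(\wt X)=-\infty$. The only difference is cosmetic: you get the strict inequality from the Riemann--Roch parity of $\wt L\cdot(\wt L-K_{\wt X})$ on $\wt X$, while the paper observes that $L\Delta=\wt L\wt\Delta=2p_a(\Delta)-2$ is even by adjunction; the two parity arguments are equivalent, and your version has the small advantage of not relying on $\Delta$ being an integral curve.
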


\begin{proof}
\refenum{i} 
 Using    the projection formula, we compute
  \[\wt L \wt \Delta=\epsilon^* L(\inverse\epsilon)_* \Delta=L\Delta= (K_{ X}+ \Delta)\Delta,\] so $\tilde L\tilde \Delta$ is a positive number and it is even,  by adjunction. Thus
\[ \wt L K_{\wt X} =  \wt L^2 -\wt L\wt \Delta=1-\wt L \wt \Delta<0 .\]
By Serre duality, we have  $h^2(\wt L)=h^0(-\wt \Delta)=0$, since $\wt L\wt \Delta=L\Delta>0$ and $\wt L$ is nef. 
\smallskip

\refenum{ii} Since $\wt L$ is nef,  the condition  $K_{\wt X}\wt L<0$ implies that  $\kappa(\wt X)=-\infty$.
\end{proof}

Next  we look at  the adjoint divisor $K_{\wt X}+\wt L$:
\begin{lem}
\label{lem:RE} Assume that $h^0(\wt L)\le 2$; then $K_{\wt X}\wt L=-1$,  and  there are  the following two  possibilities:
 \begin{itemize}
\item[$(R)$] $h^0(K_{\wt X}+\wt L)=\chi(\wt X)=1$ and $h^0(\wt L)=2$,
\item[$(E)$]  $h^0(K_{\wt X}+\wt L)=\chi(\wt X)=0$ and $h^0(\wt L)=2\text{ or } 1$.
\end{itemize}
\end{lem}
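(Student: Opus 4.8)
The plan is to extract everything from the inequality $K_{\wt X}\wt L<0$ established in Lemma~\ref{lem:h0} together with the Riemann--Roch computation for $K_{\wt X}+\wt L$. First I would compute, via Riemann--Roch on the smooth surface $\wt X$,
\[
\chi(K_{\wt X}+\wt L)=\chi(\ko_{\wt X})+\tfrac12\big((K_{\wt X}+\wt L)-K_{\wt X}\big)(K_{\wt X}+\wt L)
=\chi(\wt X)+\tfrac12\,\wt L(K_{\wt X}+\wt L)=\chi(\wt X)+\tfrac12(K_{\wt X}\wt L+1),
\]
using $\wt L^2=1$. Since $\wt L^2=1$ is odd, $K_{\wt X}\wt L$ is odd as well (because $K_{\wt X}\wt L\equiv \wt L^2\pmod 2$ by adjunction, as $\wt L\wt\Delta$ is even), and $K_{\wt X}\wt L<0$ forces $K_{\wt X}\wt L\le -1$. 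I claim $K_{\wt X}\wt L=-1$: otherwise $K_{\wt X}\wt L\le -3$, and then $\chi(K_{\wt X}+\wt L)\le\chi(\wt X)-1$. On the other hand $h^2(K_{\wt X}+\wt L)=h^0(-\wt L)=0$ since $\wt L$ is nef and big, and $h^1(K_{\wt X}+\wt L)=0$ by Kawamata--Viehweg vanishing, so $\chi(K_{\wt X}+\wt L)=h^0(K_{\wt X}+\wt L)\ge 0$; combined with the ruledness of $\wt X$ (Lemma~\ref{lem:h0}\refenum{ii}), which gives $\chi(\wt X)\le 1$, this would force $\chi(\wt X)=0$ or $1$ while $h^0(K_{\wt X}+\wt L)\le\chi(\wt X)-1\le 0$, hence $h^0(K_{\wt X}+\wt L)=0$ and $\chi(\wt X)=1$ — but then $\chi(K_{\wt X}+\wt L)=0\ne \chi(\wt X)-1 = 0$ is consistent only if the slack is exactly $1$, contradicting $K_{\wt X}\wt L\le -3$. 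So I should be a little more careful: the clean statement is that $\chi(K_{\wt X}+\wt L)=\chi(\wt X)+\tfrac12(K_{\wt X}\wt L+1)\le \chi(\wt X)$ always, with equality iff $K_{\wt X}\wt L=-1$, and since the left side is a nonnegative integer while $\chi(\wt X)\in\{0,1\}$ by ruledness, the only way to have $K_{\wt X}\wt L\le -3$ is $\chi(\wt X)=1$ and $\chi(K_{\wt X}+\wt L)=0$, giving $K_{\wt X}\wt L=-1$, a contradiction. Hence $K_{\wt X}\wt L=-1$.

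Once $K_{\wt X}\wt L=-1$ is in hand, the Riemann--Roch identity becomes simply $h^0(K_{\wt X}+\wt L)=\chi(K_{\wt X}+\wt L)=\chi(\wt X)$, using again $h^1=h^2=0$ as above. Now I use the restriction sequence $0\to K_{\wt X}\to K_{\wt X}+\wt L\to (K_{\wt X}+\wt L)|_{\wt L}\to 0$, or more directly the exact sequence $0\to\ko_{\wt X}\to\wt L\to \wt L|_C\to 0$ for $C\in|\wt L|$ together with $\chi(\wt X)\in\{0,1\}$, to tie $h^0(\wt L)$ to $\chi(\wt X)$. Concretely: if $\chi(\wt X)=1$ then $q(\wt X)=0$ (a ruled surface with $\chi=1$ is rational), so $h^1(\ko_{\wt X})=0$ and the restriction sequence gives $h^0(\wt L)=1+h^0(\wt L|_C)$; since $\deg\wt L|_C=1$ and $C$ is irreducible of arithmetic genus $\ge 0$, one has $h^0(\wt L|_C)\in\{1,2\}$, but $h^0(\wt L)\le 2$ by hypothesis forces $h^0(\wt L|_C)=1$ and $h^0(\wt L)=2$ — this is case $(R)$. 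If $\chi(\wt X)=0$ then $q(\wt X)=1$ and $\wt X$ is birationally ruled over an elliptic curve; here $h^0(K_{\wt X}+\wt L)=0$ and $h^0(\wt L)$ can be $1$ or $2$ — this is case $(E)$. Finally I must rule out $\chi(\wt X)<0$: this is immediate because $\wt X$ is ruled, so $\chi(\wt X)=1-q(\wt X)+p_g(\wt X)=1-q(\wt X)\le 1$, and for a ruled surface $q(\wt X)$ equals the genus of the base, but I need $\chi(\wt X)\ge 0$, which holds because $p_g=0$ and $q\le 1$ would be false for higher base genus — so I should instead argue via $h^0(K_{\wt X}+\wt L)\ge 0$ and the Riemann--Roch identity $h^0(K_{\wt X}+\wt L)=\chi(\wt X)$ to conclude $\chi(\wt X)\ge 0$, hence $\chi(\wt X)\in\{0,1\}$.

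The main obstacle I expect is the parity/positivity bookkeeping in the first step: pinning down $K_{\wt X}\wt L=-1$ rather than merely $K_{\wt X}\wt L<0$ requires simultaneously using (a) the parity of $K_{\wt X}\wt L$ coming from adjunction and $\wt L^2=1$, (b) the vanishing $h^1(K_{\wt X}+\wt L)=h^2(K_{\wt X}+\wt L)=0$ to turn $\chi$ into $h^0$, and (c) the bound $\chi(\wt X)\le 1$ from ruledness — and assembling these into a watertight contradiction to $K_{\wt X}\wt L\le -3$. Everything after that is a short and standard restriction-sequence argument. One mild subtlety worth flagging in the write-up: Kawamata--Viehweg vanishing applies because $\wt L$ is nef and big (even Cartier), so $h^1(K_{\wt X}+\wt L)=0$ holds unconditionally here, and this is what makes $\chi(K_{\wt X}+\wt L)=h^0(K_{\wt X}+\wt L)$ and hence the whole argument go through.
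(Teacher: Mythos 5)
The key step of your argument --- pinning down $K_{\wt X}\wt L=-1$ --- has a genuine gap. After reducing to the hypothetical case $K_{\wt X}\wt L\le -3$, you write that the only consistent configuration is $\chi(\wt X)=1$ and $\chi(K_{\wt X}+\wt L)=0$, ``giving $K_{\wt X}\wt L=-1$, a contradiction''; but $\chi(\wt X)=1$ together with $\chi(K_{\wt X}+\wt L)=\chi(\wt X)+\tfrac12(K_{\wt X}\wt L+1)=0$ gives $K_{\wt X}\wt L=-3$, i.e.\ exactly the case you are trying to exclude, so no contradiction has been produced. Moreover, none can be produced from the ingredients you assemble (Riemann--Roch and vanishing for $K_{\wt X}+\wt L$, parity of $K_{\wt X}\wt L$, and $\chi(\wt X)\le 1$): the configuration $\chi(\wt X)=1$, $K_{\wt X}\wt L=-3$, $h^0(K_{\wt X}+\wt L)=0$ is actually realized by $\wt X=\pp^2$, $\wt L=\OO_{\pp^2}(1)$ (case $(P)$ of the classification), and it is ruled out only by the hypothesis $h^0(\wt L)\le 2$, which your first step never invokes. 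The paper closes precisely this hole by applying Riemann--Roch to $\wt L$ itself: since $h^2(\wt L)=0$ by Lemma \ref{lem:h0}, in the case $\chi(\wt X)=1$, $K_{\wt X}\wt L=-3$ one gets $h^0(\wt L)\ge\chi(\wt L)=\chi(\wt X)+\tfrac12(\wt L^2-K_{\wt X}\wt L)=3$, contradicting $h^0(\wt L)\le 2$.

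The same tool would also tighten the second half of your write-up, which is otherwise on the right track: in the case $\chi(\wt X)=1$ your restriction-sequence argument needs $h^0(\wt L|_C)\ge 1$ (a degree-$1$ line bundle on a curve of positive arithmetic genus may have no sections, and the existence of $C\in|\wt L|$ itself presupposes $h^0(\wt L)\ge 1$), and in the case $\chi(\wt X)=0$ you still owe the reader $h^0(\wt L)\ge 1$, which is part of the statement. Both follow at once from $h^0(\wt L)\ge\chi(\wt L)=\chi(\wt X)+1$ when $K_{\wt X}\wt L=-1$, which is how the paper concludes.
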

\begin{proof} Since $\wt L$ is nef and big, Riemann-Roch and Kawamata-Viehweg  vanishing give:
 \begin{equation}\label{eq:K+L}
 h^0(K_{\wt X}+\wt  L)= \chi(\wt X)+\frac{\wt L^2+K_{\wt X}\wt L}{2}=\chi(\wt X)+\frac{1+K_{\wt X}\wt L}{2}\le \chi(\wt X),
 \end{equation}
 where the last inequality follows by Lemma \ref{lem:h0}. 
 Since $\wt X$ is ruled by Lemma \ref{lem:h0}, we have $\chi(\wt X)\le 1$, so $h^0(K_{\wt X}+\wt L)\le 1$ and if equality holds, then $\chi(\wt X)=1$ and $\wt LK_{\wt X}=-1$.
 
 Assume  $h^0(K_{\wt X}+\wt L)=0$. Then equation \eqref{eq:K+L} implies that
 either  $\chi(\wt X)=1$,  $K_{\wt X}\wt L=-3$ or $\chi(\wt X)=0$ and $K_{\wt X}\wt L=-1$. In the first case, using  Lemma \ref{lem:h0} and Riemann-Roch we obtain  $h^0(\wt L)\ge \chi(\wt L)= 3$, against  the assumptions. 
 In the second case, since $K_{\wt X}\wt L=-1$,  the same argument gives $h^0(\wt L)\ge \chi(\wt L)=\chi(\wt X)+1$ which gives the listed cases.
\end{proof}
Case $(R)$ of the above Lemma gives case $(dP)$ in our classification:

\begin{lem} \label{lem:dP}
If $(X, \Delta)$ is as in case $(R)$ of Lemma \ref{lem:RE} then it is of type $(dP)$.
 \end{lem}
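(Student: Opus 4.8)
The plan is to show that in case $(R)$ the minimal desingularization $\wt X$ is already a weak del Pezzo surface of degree $1$ with $\wt L = -K_{\wt X}$, and that $\epsi$ contracts exactly the $(-2)$-curves, so that $X$ is a del Pezzo surface of degree $1$ with at worst canonical singularities; the boundary then falls out of the numerics. First I would record what case $(R)$ gives us: $\chi(\wt X)=1$, $K_{\wt X}\wt L=-1$, $h^0(\wt L)=2$, and $\wt X$ is ruled (Lemma~\ref{lem:h0}), hence rational since $\chi=1$ and $q=0$. From $\wt L^2=1$ and $K_{\wt X}\wt L=-1$ together with $K_{\wt X}+\wt L = \wt L - (-K_{\wt X})$ and the equality $\wt L = K_{\wt X}+\wt\Delta$, we get $\wt\Delta \equiv -2K_{\wt X}$ as soon as we know $\wt L \equiv -K_{\wt X}$; so the crux is the numerical identification $\wt L \equiv -K_{\wt X}$.

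To prove $\wt L\equiv -K_{\wt X}$ I would argue as follows. Since $\wt L$ is nef and big with $\wt L^2=1$ and $\wt L K_{\wt X}=-1$, the divisor $D:=\wt L+K_{\wt X}$ satisfies $D^2 = 1 + 2(-1) + K_{\wt X}^2 = K_{\wt X}^2 - 1$ and $D\wt L = 1 + (-1) = 0$; but $\wt L$ is nef and big, so by the Hodge index theorem $D^2\le 0$ with equality iff $D\equiv 0$ (i.e.\ $D$ is numerically trivial, being orthogonal to a big nef class). On the other hand $h^0(K_{\wt X}+\wt L)=1$ means $D$ is effective and nonzero unless $D\equiv 0$. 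If $D$ were effective and nonzero we would have $D\wt L>0$ since $\wt L$ is big and nef and $D$ moves in... no: I would instead note $D\wt L = 0$ forces every component of the effective divisor $D$ to be $\wt L$-trivial, contradicting bigness of $\wt L$ unless $D=0$ as a divisor; hence $h^0(K_{\wt X}+\wt L)=1$ is realized by the zero divisor, i.e.\ $K_{\wt X}+\wt L \sim 0$, giving $\wt L = -K_{\wt X}$. In particular $K_{\wt X}^2 = \wt L^2 = 1$ and $-K_{\wt X}$ is nef and big.

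It remains to see that $\wt X$ is a \emph{weak} del Pezzo (i.e.\ $-K_{\wt X}$ nef and big on the minimal resolution of a del Pezzo with canonical singularities) and to descend to $X$. Since $-K_{\wt X}=\wt L$ is nef and big with $K_{\wt X}^2=1$ on a rational surface, $\wt X$ is a weak del Pezzo surface of degree $1$; contracting all its $(-2)$-curves yields a del Pezzo surface $X'$ of degree $1$ with at worst canonical (rational double point) singularities. I would then check that $\epsi\colon\wt X\to X$ is this very contraction: $\epsi$ contracts precisely the curves $\Gamma$ with $\wt L\Gamma = 0$, and since $\wt L=-K_{\wt X}$ these are exactly the curves with $K_{\wt X}\Gamma=0$, i.e.\ (being negative on a surface with $-K$ nef) the $(-2)$-curves; so $X\cong X'$ is a del Pezzo surface of degree $1$ with canonical singularities, as required by case $(dP)$. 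Finally $\wt\Delta = \wt L - K_{\wt X} = -2K_{\wt X} \in |-2K_{\wt X}|$ (it is effective with $h^0(-2K_{\wt X})\ge 2$, and it is the unique such divisor pushed forward from $\Delta$), so $\Delta\in|-2K_X|$, $K_X+\Delta = -K_X$, and $p_a(\Delta)=2$ by adjunction, matching List~\ref{list}. The only genuinely delicate point is the Hodge-index/effectivity argument that upgrades the numerical relation to $K_{\wt X}+\wt L\sim 0$ rather than merely $\equiv 0$; on a rational surface numerical and linear equivalence coincide, so this is automatic once $D^2\le 0$ and $D\wt L=0$ force $D\equiv 0$, which is the step I would write out most carefully.
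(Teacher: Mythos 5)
Your overall strategy --- reduce everything to showing that the unique effective divisor $Z\in|K_{\wt X}+\wt L|$ is zero, deduce $\wt L=-K_{\wt X}$, and then pass to the anticanonical model with $\wt\Delta=-2K_{\wt X}$ --- is exactly the paper's, and your final descent and boundary computation are fine. The gap is in the step that disposes of a nonzero $Z$. You correctly note that $Z\wt L=0$ forces every component of $Z$ to be $\wt L$-trivial, but the claim that this ``contradicts bigness of $\wt L$ unless $Z=0$'' is false: a big and nef divisor can be trivial on curves, and here $\wt L=\epsi^*L$ is trivial precisely on the $\epsi$-exceptional curves, which exist in the very case you are proving (the minimal resolution of a singular del Pezzo of degree $1$ carries $(-2)$-curves $\Gamma$ with $\wt L\Gamma=0$). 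Taken literally, your argument would show $X$ is smooth, which already signals the implication is invalid. What the Hodge index theorem actually gives from $Z\wt L=0$ is only $Z^2\le 0$, with $Z\equiv 0$ if and only if $Z^2=0$; since $Z^2=(K_{\wt X}+\wt L)^2=K_{\wt X}^2-1$ and $K_{\wt X}^2=1$ is not yet known (it is equivalent to what you are trying to prove), the numerics alone cannot close the loop. A priori $Z$ could be a nonzero effective $\epsi$-exceptional divisor with $Z^2<0$, for instance supported on the exceptional cycle over an elliptic Gorenstein singularity of $X$, where $K_{\wt X}Z>0$; nothing in your argument excludes this.

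The missing ingredient --- and the heart of the paper's proof --- is to establish $K_{\wt X}Z=0$ before invoking the index theorem. By Lemma \ref{lem: h^0>=3} the pencil $|\wt L|$ has a single simple base point lying over a smooth point of $X$, and by adjunction its general member is a smooth elliptic curve; blowing up the base point gives an elliptic fibration over $\pp^1$ with a section. Since $Z\wt L=0$, the divisor $Z$ lies in fibres and misses the section, so Kodaira's classification of elliptic fibres shows $Z$ is either zero or supported on $(-2)$-curves; hence $K_{\wt X}Z=0$, so $Z^2=ZK_{\wt X}+Z\wt L=0$, and only now does the index theorem give $Z\equiv 0$, hence $Z=0$. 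With that supplied, the remainder of your argument (weak del Pezzo of degree $1$, contraction of exactly the $\wt L$-trivial $(-2)$-curves, $\Delta\in|-2K_X|$, $p_a(\Delta)=2$) goes through and matches the paper.
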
 
\begin{proof} 
By Lemma \ref{lem: h^0>=3}, the base locus of the  pencil $|\wt L|=\epsi^*|L|$ is a simple point $\wt P$ which is the preimage of a smooth point $P\in X$; by adjunction the general $C\in |\wt L|$ is a smooth elliptic curve. Blowing up the point $P$ we get an elliptic fibration $p\colon \widehat X\to \pp^1$ with a section $\Gamma$.
  
   Denote by $Z$ the only effective divisor in $|K_{\wt X}+\wt L|$. Since $\wt L Z=0$, $Z$ does not contain the point $\wt P$ and it is contained in a finite union of curves of $|\wt L|$, hence it can be identified with a divisor $Z'$  of $\widehat X$  that is contained in a union of fibers of $p$ and  does not intersect the section $\Gamma$. 
   By the Kodaira classification of elliptic fibers, $Z'$ is either $0$ or it  is supported on a  set $R_1,\dots R_k$ of $-2$-curves;   the same is true for $Z$, since $Z'$ does not meet $\Gamma$. In particular, we have $K_{\wt X}Z=0$, hence 
   $$Z^2=ZK_{\wt X}+Z\wt L=0,$$
   and therefore $Z=0$ by the Index Theorem. So $\wt L=-K_{\wt X}$,   $X$ is a the anti-canonical model of $\wt X$ and $\wt D\in |-2K_{\wt X}|$. 
\end{proof}

We now turn to studying  case $(E)$ of Lemma \ref{lem:RE}. 
This gives rise to the cases $(E_-)$ and $(E_+)$ in our classification, depending on the value of $h^0(\wt L)$. 
\begin{lem}\label{lem:proj}
If $(X, \Delta)$ is as in case $(E)$ of Lemma \ref{lem:RE}, then
there exists an elliptic curve $E$ and a vector bundle $\ke$ on $E$ of rank 2  and degree 1 such that $\wt X=\pp(\ke)$ and $\wt L=\OO_{\wt X}(1)$.
\end{lem}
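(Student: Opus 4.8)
The plan is to identify $\wt X$ as a geometrically ruled surface over an elliptic curve, then to pin down $\wt L$ numerically and absorb a twist into the ruling. Since $\wt X$ is ruled and $\chi(\wt X)=0$, we have $p_g(\wt X)=0$ and hence $q(\wt X)=1$; let $\alpha\colon\wt X\to E$ be the Albanese map onto the elliptic curve $E$. As $\wt X$ is uniruled and $E$ is not, the rational curves sweeping out $\wt X$ are contained in fibres of $\alpha$, so the general fibre $F$ is $\cong\pp^1$; denote by $f$ its class. I would then take $\pi\colon\wt X\to\bar X$ to be the contraction of the $(-1)$-curves contained in the fibres of $\alpha$, so that $\bar X=\pp(\ke')$ is geometrically ruled over $E$ and $\alpha$ factors through $\pi$; set $\bar L:=\pi_*\wt L$.

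The crucial point is that $\pi$ is an isomorphism, i.e.\ that $\wt X$ is already relatively minimal over $E$. Suppose not; then $\pi$ contracts a $(-1)$-curve $\Gamma$ lying in a fibre of $\alpha$. Because $\epsi$ is the \emph{minimal} desingularisation, none of its exceptional curves is a $(-1)$-curve, so $\Gamma$ is not $\epsi$-exceptional and therefore $\wt L\cdot\Gamma=L\cdot\epsi_*\Gamma\ge 1$; equivalently, using $\wt L=K_{\wt X}+\wt\Delta$, we get $\wt\Delta\cdot\Gamma\ge 2$, so the $(-1)$-curve $\Gamma$ meets $\wt\Delta$ heavily. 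I expect the derivation of a contradiction from this to be the main obstacle, since it is where one must use simultaneously that $\wt\Delta\ge 0$, that $(\wt X,\wt\Delta)$ is log-canonical (both following from $\wt\Delta=\epsi^*\Delta-\sum a_iE_i$ with $-1\le a_i\le 0$), and that $h^0(\wt L)\le 2$. One natural line: pushing down, $\bar L=K_{\bar X}+\bar\Delta$ is nef with $\bar\Delta=\pi_*\wt\Delta\ge 0$, and $\bar L^2=\wt L^2-(\pi^*\bar L-\wt L)^2$ is \emph{strictly} larger than $\wt L^2=1$, because $\pi^*\bar L-\wt L$ is a nonzero effective $\pi$-exceptional divisor (as $\wt L\cdot\Gamma>0$); writing $\bar L\equiv mC_0+kf$ on $\bar X=\pp(\ke')$, nefness of $\bar L$ and $\bar\Delta\cdot f=m+2$ constrain $m$ and the multiplicities $\bar\Delta$ may attain at the centres of $\pi$, and comparing with the fact that the sections of $\bar L$ vanishing at those centres form at most a $2$-dimensional space (this space being $H^0(\wt L)$) should rule out $\bar L^2>1$; alternatively one can run the argument through Lemma~\ref{lem:Dred}, which forces the reduced member of $|\wt L|$ to be a tree of curves with a unique non-rational component, a smooth elliptic curve mapping finitely to $E$, and then shows that this curve is a section. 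Either way the conclusion is that $\pi$ is an isomorphism, so $\wt X=\pp(\ke')$ is geometrically ruled over $E$.

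With $\wt X=\pp(\ke')$ in hand the rest is a short computation. Fix a section $C_0$ with $\OO_{\wt X}(C_0)=\OO_{\pp(\ke')}(1)$, so $C_0^2=e:=\deg\ke'$ and $K_{\wt X}\equiv-2C_0+ef$. Writing $\wt L\equiv a\,C_0+bf$ gives $\wt L^2=a(ae+2b)$ and $K_{\wt X}\wt L=-(ae+2b)$; since $\wt L^2=1$ and $K_{\wt X}\wt L=-1$ by Lemma~\ref{lem:RE}, we obtain $ae+2b=1$ and then $a=1$. Hence $\wt L=\OO_{\pp(\ke')}(1)\otimes\alpha^*M$ for a line bundle $M$ of degree $b$ on $E$; setting $\ke:=\ke'\otimes M$ we get $\wt X=\pp(\ke)$, $\wt L=\OO_{\pp(\ke)}(1)$, and $\deg\ke=e+2b=1$, which is the assertion.
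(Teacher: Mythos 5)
Your endgame (once $\wt X$ is known to be a $\pp^1$-bundle over $E$, write $\wt L\equiv aC_0+bf$, use $\wt L^2=1$, $K_{\wt X}\wt L=-1$ to get $a=1$, and absorb the twist into $\ke$) is correct and essentially equivalent to the paper's conclusion, which simply sets $\ke=a_*\wt L$. The problem is the step you yourself flag as ``the main obstacle'': proving that the Albanese fibration has no $(-1)$-curves in its fibres. Neither of your two sketches closes this. For the first: the constraints you propose to use (nefness of $\bar L=\pi_*\wt L$, $\bar\Delta\cdot f$, and $h^0(\wt L)\le 2$) do \emph{not} rule out $\bar L^2>1$. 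Concretely, a single blow-down with $\wt L\cdot\Gamma=1$ forces, by your own numerics, $\bar L\equiv C_0+bf$ with $\deg\ke'+2b=2$, hence $\bar L^2=2$, $K_{\bar X}\bar L=-2$, $\chi(\bar L)=2$, $h^2(\bar L)=0$; imposing one base point leaves $h^0(\wt L)\in\{1,2\}$, so every condition on your list is satisfied and no contradiction appears. What actually excludes this configuration is a second use of the minimality of $\epsi$: the strict transform of the ruling fibre through the blown-up point is a $(-1)$-curve of $\wt L$-degree $\bar L\cdot f-1=0$, hence would have to be $\epsi$-exceptional, which is forbidden. But to run that argument in general you must first know $\wt L\cdot F$, i.e.\ you need $\wt L F=1$ \emph{before} you can argue about relative minimality.

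That is precisely how the paper proceeds, and it is what your second sketch gestures at without supplying the crux. The paper's Step~1 takes $C\in|\wt L|$ (nonempty in case $(E)$), uses Lemma~\ref{lem:Dred} to get $p_a(C\red)\le 1$ and the injectivity of $\Pic^0(\wt X)\to\Pic^0(C\red)$, and then the structure result from N\'eron-model theory to write $C\red=C_0+Z$ with $C_0$ elliptic mapping isomorphically to $E$ and $Z$ a tree of rational (hence vertical) curves. This alone only gives $\wt L F=b$, where $b$ is the multiplicity of $C_0$ in $C$; the essential remaining point, absent from your sketch, is to exclude $b>1$. The paper does this by noting that $b>1$ forces $\wt L C_0=0$, so $C_0^2<0$ and $C_0$ meets no other $\epsi$-exceptional curve, while exactly one component $\Gamma$ of $C$ has $\wt L\Gamma>0$ and it occurs with multiplicity one and satisfies $C_0\Gamma\le 1$; then $0=C_0\wt L=bC_0^2+C_0\Gamma\le 1-b<0$ is a contradiction. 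Only after $\wt L F=1$ is established does the ``no $(-1)$-curves in fibres'' argument become the short one you want: $0<\wt L\Gamma\le\wt L F=1$, so $\wt L(F-\Gamma)=0$ and $K_{\wt X}(F-\Gamma)=-1$, producing a $(-1)$-curve of $\wt L$-degree zero, against the minimality of $\epsi$. As it stands, your proposal is missing this entire mechanism, so the central implication is unproved.
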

\begin{proof} 
By  Lemma \ref{lem:h0} and Lemma \ref{lem:RE}, the surface $\wt X$ is ruled and $q(\wt X)=1$; we denote by $a\colon \wt X\to E$ the Albanese map and by $F$ a fiber of $a$. 
\smallskip

\noindent{\bf Step 1:} {\em one has $\wt L F=1$}\newline
The linear system $|\wt L|$ is non-empty by Lemma \ref{lem:RE}. Fix $C\in |\wt L|$ and denote by $C\red$ the underlying reduced divisor. One has $p_a(C)=1$ by adjunction  and $p_a(C\red)\le 1$ by Lemma \ref{lem:Dred}. The natural map   $\Pic^0(E)=\Pic^0(X)\to \Pic^0(C\red)$ is an inclusion by Lemma \ref{lem:Dred}. Thus $p_a(C\red)=1$ and $\Pic^0(E)\to \Pic^0(C\red)$ is  an isomorphism. By \cite[Ch. 9, Cor. 12]{BLRNeron},
 $C\red=C_0+Z$, where $C_0$ is an elliptic curve that is mapped isomorphically onto $E$ by $a$, $Z$ is a sum  of smooth rational curves and the dual graph of $C\red$ is a tree. 
We write $C=bC_0+Z'$, where $b>0$ is an integer and $Z'$ has the same support as $Z$. If $b=1$, then $\wt L F=1$ as claimed. 

So assume by contradiction that $b>1$: in this case $1=\wt L^2\ge b\wt L C_0$ gives $\wt LC_0=0$. Then $C_0^2<0$,  $C_0$ is contracted by $\wt L$ to an elliptic singularity and it does not intersect any other $\epsi$-exceptional curve. Since $\wt L$ is nef and $\wt L C=\wt L^2=1$, there is exactly one component $\Gamma$ of $C$ that has nonzero intersection with $\wt L$, and $\Gamma$ appears in $C$ with multiplicity 1. In particular, $Z'-\Gamma$ is contracted by $\epsilon$ and therefore $C_0(Z'-\Gamma)=0$.
We have $C_0\Gamma \le 1$, since $\Gamma$ is contained in a fiber of $a$. 
Hence we have 
\[0=C_0\wt L=C_0(bC_0+\Gamma+(Z'-\Gamma))=bC_0^2+C_0\Gamma\le 1-b<0,\]
a contradiction.
\smallskip

\noindent{\bf Step 2:} {\em conclusion of the proof}\newline
We claim   that $a\colon \wt X\to E$ is a $\pp^1$-bundle. Indeed, assume by contradiction  that $\wt X$ contains an irreducible $(-1)$-curve $\Gamma$: then $\wt L \Gamma>0$, because $\wt X\to X$ is the minimal resolution and $\wt L$ is the pull back of an ample line bundle on $X$. On the other hand   $\wt L\Gamma\le \wt L F=1$, since $\Gamma$ is contained in a fiber $F$ of $a$. Hence $\wt L\Gamma=1$.
 But then we have $\wt L(F-\Gamma)=0$ and $K_{\wt X}(F-\Gamma)=-1$, namely $F-\Gamma$ contains a $(-1)$-curve $\Gamma_1$ with $\wt L\Gamma_1=0$, a contradiction.

Finally, we set $\ke=a_*\wt L$.  
\end{proof}

\begin{lem}\label{lem:E-+}
 Assume we are in case $(E)$ of Lemma \ref{lem:RE}.
\begin{enumerate}
\item If $h^0(\wt L)=2$, then $(X,\Delta)$ is of type $(E_-)$.
\item If $h^0(\wt L)=1$, then $(X,\Delta)$ is of type $(E_+)$.
\end{enumerate} 
\end{lem}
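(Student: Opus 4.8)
The plan starts from Lemma~\ref{lem:proj}: we have $\wt X=\pp(\ke)\xrightarrow{\,a\,}E$ with $E$ elliptic, $\ke=a_*\wt L$ of rank~$2$ and degree~$1$, and $\wt L=\OO_{\wt X}(1)$; the first task is to pin down $\ke$. The two inputs are that $\wt L$ is nef and that $\wt L=\epsi^*L$ with $L=K_X+\Delta$ ample and \emph{Cartier}, so that $\wt L$ restricts trivially to every $\epsi$-exceptional curve. I claim these force a dichotomy: either $\ke\cong\OO_E\oplus\OO_E(x)$ for some $x\in E$, and then $h^0(\wt L)=h^0(\ke)=2$; or $\ke$ is indecomposable, and then $h^0(\wt L)=1$ and, after a twist by a line bundle of degree $0$ normalising $\det\ke=\OO_E(0)$ (which does not change $\pp(\ke)$), $\wt X=\pp(\ke)$ is the surface $S^2E$ of case $(E_+)$ in List~\ref{list}. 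To prove the dichotomy: if $\ke$ has a line subbundle $\mathcal L$ of degree $\ge1$, then (passing to the saturation) the quotient $N:=\ke/\mathcal L$ is a line bundle of degree $\le0$, so the associated section $\sigma\subset\wt X$ has $\wt L\cdot\sigma=\deg N\le0$; nefness gives $\deg N=0$ and $\deg\mathcal L=1$, and then $\sigma$ is $\epsi$-exceptional, whence $N=\wt L|_{\sigma}=\OO_\sigma$; so $\ke$ is an extension of $\OO_E$ by $\mathcal L$, which splits since $H^1(\mathcal L)=0$, giving $\ke\cong\OO_E\oplus\OO_E(x)$. Otherwise every line subbundle of $\ke$ has degree $\le0$; then $\ke$ is indecomposable (a splitting would produce a line subbundle of positive degree), and any nonzero section of $\ke$ (one exists as $h^0(\wt L)\ge1$ in case $(E)$) spans a line subbundle of nonnegative, hence zero, degree, so it vanishes nowhere — exhibiting $\ke$ as a non-split extension $0\to\OO_E\to\ke\to\OO_E(x)\to0$, which is unique up to isomorphism because $\dim\operatorname{Ext}^1(\OO_E(x),\OO_E)=h^1(\OO_E(-x))=1$, and for which $h^0(\wt L)=h^0(\ke)=1$.

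Granting the dichotomy, the two cases of the statement correspond to its two alternatives, and it remains to identify the pair in each. If $h^0(\wt L)=2$, so $\wt X=\pp(\OO_E\oplus\OO_E(-x))$: writing $C_0$ for the section with $C_0^2=-1$ and $F$ for a fibre, one gets $\wt L\equiv C_0+F$ from $\wt L\cdot F=1$ and $\wt L^2=1$, and a short computation with adjunction shows $C_0$ is the only irreducible curve with $\wt L\cdot C_0=0$. Hence $\epsi$ is exactly the contraction of the smooth elliptic $(-1)$-curve $C_0$, so $X$ has a single elliptic Gorenstein singularity of degree~$1$ and $\epsi^*K_X=K_{\wt X}+C_0$. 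Then $\wt\Delta=\wt L-K_{\wt X}=C_0+\epsi^*\Delta$; since the discrepancy of $C_0$ in $(X,\Delta)$ equals $-1-\mult_{C_0}(\epsi^*\Delta)$, log-canonicity forces $\Delta$ to miss the singular point, so $\epsi^*\Delta=\Delta_0$ is the strict transform, disjoint from $C_0$, with $\Delta_0=\wt\Delta-C_0\equiv 2(C_0+F)$, $p_a(\Delta)=p_a(\Delta_0)=2$ by adjunction, and $K_X+\Delta=L$ pulling back to $C_0+F$. That is precisely type $(E_-)$.

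If $h^0(\wt L)=1$, so $\ke$ is indecomposable and $\wt X=\pp(\ke)=S^2E$: now the section $C_0$ has $C_0^2=1$, and $\wt L\equiv C_0$ is ample, since $\wt L^2=1$ and $\wt L\cdot C>0$ for every curve $C$ — a geometrically ruled surface over an elliptic curve has neither a $(-1)$-curve nor a section of self-intersection $-1$, so nothing is orthogonal to $\wt L$. Thus $\epsi$ is an isomorphism, $X=\wt X=S^2E$ is smooth, and $K_{\wt X}\equiv-2C_0+F$ gives $\Delta=\wt\Delta\equiv 3C_0-F$, $p_a(\Delta)=2$ by adjunction and $K_X+\Delta\equiv C_0$: this is type $(E_+)$.

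The step I expect to be the main obstacle is the dichotomy, and within it the single genuinely non-formal point: excluding the a priori third case that $\ke$ is decomposable of the shape $\OO_E(x)\oplus M$ with $M\in\operatorname{Pic}^0(E)\setminus\{\OO_E\}$, which would still give $h^0(\wt L)=1$ but now a \emph{singular} $X$ — a pair of neither listed type. This is the one place where it matters that $K_X+\Delta$ is Cartier rather than merely nef and big: since $\wt L=\epsi^*L$ is pulled back from $X$, it restricts trivially to the curve $C_0$ that $\epsi$ contracts, and this forces $M=\wt L|_{C_0}=\OO_E$. The remaining ingredients — the bundle bookkeeping, the intersection numbers, and the discrepancy computation — are routine.
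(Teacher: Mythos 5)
Your argument is correct and follows essentially the same route as the paper's: starting from Lemma~\ref{lem:proj}, your decisive step --- excluding $\ke\cong\OO_E(x)\oplus M$ with $M\in\Pic^0(E)$ non-trivial because a section $\sigma$ with $\wt L\cdot\sigma=0$ is $\epsi$-exceptional and hence $\wt L|_\sigma$ must be trivial --- is exactly the paper's exclusion of its case (b), and the identification of the types $(E_-)$ and $(E_+)$ (contraction of the elliptic $(-1)$-section, log-canonicity forcing $\Delta$ off the singularity, resp.\ $X=\wt X=S^2E$ with $\wt L$ ample) proceeds as in the paper. The only cosmetic differences are that you rule out the remaining decomposable possibilities by nefness of $\wt L$ on sections, where the paper instead uses $1\le h^0(\wt L)\le 2$ together with Lemma~\ref{lem: h^0>=3}, and that you spell out the discrepancy computation and sketch the (standard, and in the paper simply quoted from the theory of ruled surfaces over elliptic curves) ampleness of $\OO_{\wt X}(1)$ on $S^2E$.
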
 
\begin{proof} By  Lemma \ref{lem:proj} there exists an elliptic curve $E$ and a vector bundle $\ke$ on $E$ of rank 2 and degree 1 such that $\wt X=\pp(\ke)$ and $\wt L=\OO_{\wt X}(1)$. 
Denote by $x\in E$ the point such that $\det \ke=\OO_E(x)$. 
We will freely use the general theory of $\IP^1$-bundles and especially the classification of such bundles over an elliptic curve, see \cite[Ch.~V.2]{Hartshorne}.

Assume that $\ke$ is decomposable, i.e., that there are line bundles $A$ and $B$ on $E$ such that $\ke=A\oplus B$. Then we have $\deg A+\deg B=\deg \ke =1$ and $1\le h^0(A)+h^0(B)=h^0(\wt L)\le 2$.  So  there are three possibilities:
\begin{itemize}
\item[(a)] $\deg A=-1$, $\deg B=2$;
\item[(b)] $\deg A=0$, $A\ne \OO_E$ and   $\deg B=1$;
\item[(c)] $A=\OO_E$ and $B=\OO_E(x)$.
\end{itemize}
We denote by $C_0$ the section of $\wt X$ corresponding to the surjection $\ke \twoheadrightarrow A$. 
In case (a), the system $|\wt L|=|\OO_{\wt X}(1)|$ has dimension 1 and has $C_0$ as   fixed part, contradicting Lemma \ref{lem: h^0>=3}. So this case does not occur. 
In case (b), we have $\wt L C_0=0$, but $\wt L|_{C_0}$ is non-trivial: this contradicts the assumption that $\wt L$ is the pull-back of an ample line bundle via the birational map $\epsi\colon \wt X\to X$.  So (c) is the only possibility.   
In this case $C_0$ is contracted  to an elliptic singularity of degree 1 by $\epsi$ and $C_0$ is the only curve contracted by $\epsi$ since
$\NS(\wt X)$ has rank 2.
We have   $\wt \Delta=\wt L-K_{\wt X}=3C_0+2F$.  Since $K_{\wt X} =\epsi^*K_X-C_0$ and  $\Delta$ does not go through the elliptic singularity 
of $X$ because  the pair $(X,\Delta)$ is lc, we obtain $\epsi^*\Delta=\wt \Delta- C_0=2C_0+2F$ and $(X,\Delta)$ is a log surface of type $(E_-)$.

If $\ke$ is indecomposable, then  $\ke$ is the only non-trivial extension $0\to\OO_E\to\ke\to\OO_E(x)\to 0$ and $h^0(\wt L)=h^0(\ke)=1$.  Up to a translation in $E$, we may assume that $x$ is the origin $0\in E$. Hence  $\wt X=S^2E$  and $C=C_0=\wt L$ is the image of the curve $\{0\}\times E+E\times \{0\}$ via the quotient map $E\times E\to S^2E$ (cf. description of case $(E_+)$ at the beginning of the section).
 Since $\wt L$ is ample, we have $\wt X=X$, $\wt L= L$ and $\wt \Delta=\Delta=L-K_X$ is  numerically equivalent to $3C_0-F$. So the pair $(X,\Delta)$ is of type $(E_+)$.
 \end{proof}

 Finally,  we summarize all the above results:
\begin{proof}[Proof of Theorem \ref{thm: pairs}]
If $h^0(\wt L)\ge 3$, then by Lemma \ref{lem: h^0>=3} we have $X=\pp^2$ and $L=\OO_{\pp^2}(1)$, and  thus $(X,\Delta)$ is of type $(P)$.

So we may assume $h^0(\wt L)\le 2$, which by Lemma \ref{lem:RE} leaves us with the cases $(R)$ and $(E)$, according to the value of  $\chi({\tilde X})$. The first case gives type $(dP)$ by Lemma \ref{lem:dP} while the second splits up into the cases $(E_+)$ and $(E_-)$ by Lemma \ref{lem:E-+}. This concludes the proof of the Theorem.
\end{proof}

\section{Applications to stable surfaces}\label{section: applications to moduli}
In this section we explore some  consequences of the classification of pairs in Theorem \ref{thm: pairs} for the study of  stable surfaces with $K^2=1$. 
\subsection{Definitions and  Koll\'ar's gluing construction}\label{section: definitions}
 Our main reference for this section  is \cite[Sect.~5.1--5.3]{KollarSMMP}.

\subsubsection{Stable surfaces}
Let $X$ be a demi-normal surface, that is,  $X$ satisfies $S_2$ and  at each point of codimension one $X$ is either regular or has an ordinary double point.
We denote by  $\pi\colon \bar X \to X$ the normalisation of $X$.
Contrary to our previous assumptions $X$ is not assumed irreducible, in particular, $\bar X$ is possibly disconnected.
The conductor ideal
$ \shom_{\ko_X}(\pi_*\ko_{\bar X}, \ko_X)$
is an ideal sheaf in both $\ko_X$ and $\ko_{\bar X} $ and as such defines subschemes
$D\subset X \text{ and } \bar D\subset \bar X,$
both reduced and pure of codimension 1; we often refer to $D$ as the non-normal locus of $X$.

\begin{defin}\label{defin: slc}
The  demi-normal surface $X$ is said to have \emph{semi-log-canonical (slc)}  singularities if it satisfies the following conditions: 
\begin{enumerate}
 \item The canonical divisor $K_X$ is $\IQ$-Cartier.
\item The pair $(\bar X, \bar D)$ has log-canonical (lc) singularities. 
\end{enumerate}
It  is called a stable  surface 
 if in addition $K_X$ is ample. In that case we define the geometric genus of $X$ to be $ p_g(X) = h^0(X, K_X) = h^2(X, \ko_X)$ and the irregularity as $q(X) = h^1(X, K_X) = h^1(X, \OO_X)$. 
A Gorenstein stable surface is a stable surface such that $K_X$ is a Cartier divisor.
\end{defin}
The importance of these surfaces lies in the fact that they generalise stable curves: there is a projective moduli space of stable surfaces which compactifies the Gieseker moduli space of canonical model of surfaces of general type \cite{KollarModuli}.

\subsubsection{Koll\'ar's gluing principle}\label{ssec:kollar-glue}
Let $X$ be a demi-normal surface as above. Since $X$ has at most double points in codimension one, the map $\pi\colon \bar D \to D$ on the conductor divisors is generically a double cover and thus  induces a rational  involution on $\bar D$. Normalising the conductor loci we get an honest involution $\tau\colon \bar D^\nu\to \bar D^\nu$ such that $D^\nu = \bar D^\nu/\tau$ and such that $\Diff_{\bar D^\nu}(0)$ is $\tau$-invariant  (for the definition of the different see for example \cite[5.11]{KollarSMMP}).
\begin{theo}[{\cite[Thm.~5.13]{KollarSMMP}}]\label{thm: triple}
Associating to a stable surface $X$ the triple $(\bar X, \bar D, \tau\colon \bar D^\nu\to \bar D^\nu)$ induces a one-to-one correspondence
 \[
  \left\{ \text{\begin{minipage}{.12\textwidth}
 \begin{center}
         stable  surfaces  
 \end{center}
         \end{minipage}}
 \right\} \leftrightarrow
 \left\{ (\bar X, \bar D, \tau)\left|\,\text{\begin{minipage}{.37\textwidth}
   $(\bar X, \bar D)$ log-canonical pair with 
  $K_{\bar X}+\bar D$ ample, \\
   $\tau\colon \bar D^\nu\to \bar D^\nu$  involution s.th.\
    $\Diff_{\bar D^\nu}(0)$ is $\tau$-invariant.
            \end{minipage}}\right.
 \right\}.
 \]
\textbf{\upshape Addendum:} In the above correspondence the surface $X$ is Gorenstein if and only if  $K_{\bar X}+\bar D$ is Cartier and $\tau$ induces a fixed-point free involution on the preimages of the nodes of $\bar D$.
 \end{theo}

An important consequence, which allows to understand the geometry of stable  surfaces from the normalisation, is that \begin{equation}\label{diagr: pushout}
\begin{tikzcd}
    \bar X \dar{\pi}\rar[hookleftarrow]{\bar\iota} & \bar D\dar{\pi} & \bar D^\nu \lar[swap]{\bar\nu}\dar{/\tau}
    \\
X\rar[hookleftarrow]{\iota} &D &D^\nu\lar[swap]{\nu}
    \end{tikzcd}
\end{equation}
is a pushout diagram.

\begin{proof}[Proof of the Addendum in  Theorem \ref{thm: triple}]
Clearly, if $X$ is Gorenstein then $K_{\bar X}+\bar D=\pi^*K_X$ is an ample Cartier divisor. The converse follows from the classification of slc surface singularities in terms of the minimal semi-resolution \cite[Prop.~4.27]{ksb88}. More precisely, in the Gorenstein case the only singular points of $\bar X$ along $\bar D$ are contained in nodes of $\bar D$ and the different $\Diff_{\bar D^\nu}(0)$ is the sum of preimages of the nodes, each with coefficient 1. Thus the $\tau$-invariance of the different gives the action on the preimages of the nodes of $\bar D$.  Let $P\in X$ be the image of a node of $\bar D$. If $\tau$ fixes a point in the preimage of  $P$  in $\bar D^\nu$ then the  exceptional divisor over $P$  in the minimal semi-resolution cannot be a cycle of rational curves. Therefore, by classification the non-normal point $P$ is a quotient of a degenerate cusp  and it is not Gorenstein. This proves the remaining claim.
\end{proof}
Computing the main invariants of a stable surface from its normalisation is not difficult, see for example \cite[Prop.~2.5]{liu-rollenske13}.
\begin{prop}\label{prop: invariants}
 Let $X$ be a stable surface with normalisation $(\bar X,\bar D)$. Then
 $K_X^2 = (K_{\bar X}+\bar D)^2$  and  $\chi(X) = \chi({\bar X})+\chi(D)-\chi({\bar D})$.
\end{prop}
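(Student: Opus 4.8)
The plan is to prove the two identities separately; both are formal consequences of standard properties of the normalisation morphism $\pi\colon \bar X\to X$, and the possible disconnectedness of $\bar X$ causes no trouble since every quantity involved is additive over connected components.

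For the equality $K_X^2=(K_{\bar X}+\bar D)^2$, the key input is the adjunction formula for the normalisation of a demi-normal surface, $\pi^*K_X=K_{\bar X}+\bar D$, where the conductor $\bar D$ enters with coefficient exactly $1$ (see \cite[Sect.~5.1--5.3]{KollarSMMP}). Since $K_X$ is $\IQ$-Cartier by the definition of an slc surface, the left-hand side is defined and $\IQ$-Cartier, hence so is $K_{\bar X}+\bar D$, and the self-intersection on the right makes sense. As $\pi$ is finite and birational it has degree $1$, so the projection formula gives
\[(K_{\bar X}+\bar D)^2=(\pi^*K_X)^2=\deg(\pi)\cdot K_X^2=K_X^2.\]

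For the Euler characteristic formula I would use that the conductor square, i.e.\ the left-hand square of the pushout diagram~\eqref{diagr: pushout}, induces on structure sheaves the short exact ``Mayer--Vietoris'' sequence
\[0\to \OO_X\to \pi_*\OO_{\bar X}\oplus \iota_*\OO_D\to \pi_*\OO_{\bar D}\to 0,\]
in which the first map is the pair of restrictions to $\bar X$ and to $D$ and the second is the difference of the two induced maps to $\bar D$. Concretely, this comes from the conductor ideal $\mathcal C\subset \OO_X$, which is simultaneously an ideal sheaf of $\pi_*\OO_{\bar X}$, with $\OO_X/\mathcal C\cong \iota_*\OO_D$ and $\pi_*\OO_{\bar X}/\mathcal C\cong \pi_*\OO_{\bar D}$; the displayed sequence is the standard ``difference'' sequence attached to these two presentations. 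Taking Euler characteristics, and using that $\pi$ and $\iota$ are finite so that $\chi$ is unchanged under push-forward, one obtains $\chi(X)=\chi(\bar X)+\chi(D)-\chi(\bar D)$.

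Neither step presents a genuine obstacle: the only points requiring a little care are that the adjunction relation $\pi^*K_X=K_{\bar X}+\bar D$ is an honest equality with conductor coefficient $1$ (not merely an inequality), and that the conductor square is a genuine pushout of schemes so that the displayed sequence is exact. Both are part of the structure theory of demi-normal surfaces recalled above and may be quoted directly, so I expect the verification to be entirely routine.
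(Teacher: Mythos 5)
Your proof is correct. The paper does not argue this proposition itself but quotes it from \cite[Prop.~2.5]{liu-rollenske13}, and your argument --- the adjunction $\pi^*K_X=K_{\bar X}+\bar D$ together with the projection formula for the finite birational map $\pi$, and for $\chi$ the two conductor presentations $\OO_X/\mathcal C\cong\iota_*\OO_D$ and $\pi_*\OO_{\bar X}/\mathcal C\cong\pi_*\OO_{\bar D}$ (which already give the formula without even needing the full pushout/Mayer--Vietoris exactness) --- is precisely the standard argument underlying that reference, so it matches the intended proof.
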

Note in particular that, by Nakai-Moishezon, a  Gorenstein stable surface with $K_X^2 = 1$ is irreducible.
Summing up,  we now state our main motivation for the classification in Theorem \ref{thm: pairs} explicitly:
\begin{cor}\label{cor: main motivation}
 Let  $X$ be a Gorenstein stable surface with $K_X^2 = 1 $ and let  $(\bar X, \bar D, \tau)$ be the corresponding triple as above. Then $(\bar X, \bar D)$ is of one of the types classified in Theorem \ref{thm: pairs}.
\end{cor}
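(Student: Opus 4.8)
The statement follows almost immediately from Theorem~\ref{thm: pairs} once we check that the normalisation pair $(\bar X, \bar D)$ satisfies all the hypotheses of that theorem. There are three things to verify: that $(\bar X,\bar D)$ is a two-dimensional log-canonical pair, that $\bar D>0$, and that $K_{\bar X}+\bar D$ is an ample Cartier divisor with $(K_{\bar X}+\bar D)^2=1$.

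\textbf{Key steps.} First, the log-canonicity of $(\bar X, \bar D)$ is part of the definition of an slc surface (Definition~\ref{defin: slc}(2)), and $\bar D$ has dimension one, so in particular $\dim \bar X=2$. Second, by Proposition~\ref{prop: invariants} we have $(K_{\bar X}+\bar D)^2=K_X^2=1$, and by Theorem~\ref{thm: triple} the divisor $K_{\bar X}+\bar D$ is ample; since $X$ is Gorenstein, the Addendum to Theorem~\ref{thm: triple} tells us that $K_{\bar X}+\bar D$ is in fact Cartier (equivalently, $K_{\bar X}+\bar D=\pi^*K_X$ with $K_X$ Cartier). Third, we must rule out $\bar D=0$: if $\bar D$ were empty then $X=\bar X$ would be normal, hence $X$ would be a normal Gorenstein stable surface with $K_X^2=1$; but such surfaces do exist (e.g.\ smooth surfaces of general type with $K^2=1$), so $\bar D=0$ is genuinely possible and the corollary as stated only claims something when $\bar D>0$. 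Re-reading the statement, it asserts $(\bar X,\bar D)$ is of one of the \emph{types} in Theorem~\ref{thm: pairs}; strictly this presupposes $\bar D>0$, so either the corollary is implicitly restricted to the non-normal case or one interprets ``of one of the types'' vacuously when $X$ is normal. The cleanest formulation is: if $X$ is non-normal, then $\bar D>0$ (since the conductor is nonempty and pure of codimension one exactly when $X$ is not normal), and Theorem~\ref{thm: pairs} applies directly.

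\textbf{Writing it up.} Concretely, I would argue: let $X$ be a Gorenstein stable surface with $K_X^2=1$ that is not normal, with normalisation $\pi\colon \bar X\to X$ and conductor $\bar D\subset \bar X$. Then $\bar D$ is a nonzero reduced curve (it is the locus where $\pi$ fails to be an isomorphism, pure of codimension one), $(\bar X,\bar D)$ is lc by the slc condition, $K_{\bar X}+\bar D=\pi^*K_X$ is Cartier and ample by the Addendum to Theorem~\ref{thm: triple}, and $(K_{\bar X}+\bar D)^2=K_X^2=1$ by Proposition~\ref{prop: invariants}. Hence $(\bar X,\bar D)$ satisfies the hypotheses of Theorem~\ref{thm: pairs}, so it is of type $(P)$, $(dP)$, $(E_+)$ or $(E_-)$.

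\textbf{Main obstacle.} There is essentially no mathematical difficulty here --- every ingredient has been established. The only subtlety is bookkeeping: making sure that ``$\bar D>0$'' really does hold in the relevant case, which comes down to the elementary fact that the conductor of a non-normal demi-normal surface is a nonempty divisor, together with the choice to phrase the corollary for non-normal $X$ (the normal case being covered, more coarsely, by Theorem~\ref{thm:normal-case} of Section~\ref{section: bigandnef}). So the ``hard part'' is merely citing the right earlier results in the right order rather than proving anything new.
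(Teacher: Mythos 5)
Your proposal is correct and takes essentially the same route as the paper, which states the corollary without a separate proof as an immediate consequence of Theorem~\ref{thm: triple} (with its Addendum giving that $K_{\bar X}+\bar D=\pi^*K_X$ is ample and Cartier), Proposition~\ref{prop: invariants} (giving $(K_{\bar X}+\bar D)^2=K_X^2=1$), the slc condition (giving log-canonicity), and Theorem~\ref{thm: pairs}; your reading of the $\bar D>0$ issue also matches the paper's intent, since the introduction explicitly phrases the corollary for non-normal $X$. The one ingredient the paper records that you pass over is irreducibility: demi-normal stable surfaces are allowed to be reducible (so $\bar X$ could a priori be disconnected), and the paper notes just before the corollary that a Gorenstein stable surface with $K_X^2=1$ is irreducible by Nakai--Moishezon, which is what licenses applying Theorem~\ref{thm: pairs} (stated for irreducible pairs) to $(\bar X,\bar D)$.
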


\subsection{Numerology}\label{section: Numerology}
In this section we feed  the classification from Section \ref{sec:pairs} into Koll\'ar's gluing construction. The result is a precise list of the possible  normalisations  of a non-normal Gorenstein stable surface with $K_X^2 = 1$. We also give the possible values of $\chi(X)$ for each type, showing in particular  that there are no Gorenstein stable surfaces with $K_X^2 =1$ and $\chi(X)<0$.

We start with a preliminary lemma. In order to state it,  we keep the notation from Section \ref{ssec:kollar-glue} and introduce some additional numerical invariants of a stable surface  $X$:
\begin{itemize}
\item $\mu_1$, the number of degenerate cusps 
\item $\mu_2$,  the number of $\IZ/2\IZ$-quotients of degenerate cusps of $X$ 
\item $\rho$, the number of ramification points of the map $\bar D^\nu \to D^\nu$
\item $\bar \mu$ the number of nodes of $\bar D$
\end{itemize}
\begin{lem}\label{lem: chi(D)} Let $X$ be a non-normal stable surface. With the above notation: 
\begin{enumerate}
\item $\chi(D) = \frac 12\left( \chi({\bar D})-\bar \mu\right)+\frac\rho 4+\mu_1$.
\item If  $K_{\bar X}+\bar D$ is Cartier then $\chi(D) \geq 2\chi({\bar D})+\frac \rho 4 +\mu_1$. 
\item If $X$ is Gorenstein,  then $\chi(D)\geq 2\chi({\bar D})+1$.
 \end{enumerate}
 In addition, if equality holds in \refenum{ii} or \refenum{iii}, then $\bar D$ is a union of rational curves and has $-3\chi({\bar D})$ nodes.
\end{lem}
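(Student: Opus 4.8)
The overall strategy is to relate the Euler characteristics of $D$ and $\bar D$ through the normalisation diagram \eqref{diagr: pushout}, keeping careful track of the local contributions at the various types of singular points that can appear on $D$ and $\bar D$. First I would establish \refenum{i} by a local-to-global computation: since $\nu\colon D^\nu\to D$ and $\bar\nu\colon \bar D^\nu\to \bar D$ are normalisations of curves, one has $\chi(D^\nu)=\chi(\bar D^\nu)$ (the involution $\tau$ identifies $\bar D^\nu$ in pairs, so $\chi(D^\nu)=\tfrac12\chi(\bar D^\nu)$, and $\chi(\bar D^\nu)=\chi(\bar D)+\bar\mu$ because resolving the $\bar\mu$ nodes of $\bar D$ adds $\bar\mu$ to the Euler characteristic). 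Then I would account for how $\chi$ changes when passing from $D^\nu$ back to $D$: at a ramification point of $\bar D^\nu\to D^\nu$ the two sheets of $\bar D^\nu$ are glued transversally over one point, at a degenerate cusp the local picture is prescribed by Kollár's classification, and $\tau$-invariance of $\Diff_{\bar D^\nu}(0)$ constrains exactly which gluings occur. Collecting the $\tfrac\rho4$ from ramification points and the $\mu_1$ (resp.\ combining the degenerate cusp and $\IZ/2\IZ$-quotient contributions) yields the stated formula; this is the bookkeeping step and is essentially a case analysis over the slc non-normal surface singularities as in \cite[Prop.~4.27]{ksb88} and \cite[5.11]{KollarSMMP}.

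For \refenum{ii}, I would feed in the hypothesis that $K_{\bar X}+\bar D$ is Cartier. By Lemma \ref{lem:Dred}\refenum{i} (or directly, since $\bar D$ is the boundary of an lc pair with big and nef $K_{\bar X}+\bar D$) one controls $p_a$ of the components and their intersections; the key inequality is that each node of $\bar D$ contributes at least a bounded amount and that the arithmetic genus of $\bar D$ is nonnegative, so $\bar\mu\le -3\chi(\bar D)$ under the Cartier hypothesis — this is where the factor $3$ enters, via the fact that in the Gorenstein/Cartier situation each local non-normal point forces the dual graph of $\bar D$ to be a cycle, pushing $\bar\mu$ up against $\chi(\bar D)=-p_a(\bar D)+(\text{number of components})$. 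Substituting $\bar\mu\le -3\chi(\bar D)$ into \refenum{i} converts the $-\tfrac12\bar\mu$ term into a gain of at least $\tfrac32\chi(\bar D)$ over $\tfrac12\chi(\bar D)$, giving $\chi(D)\ge 2\chi(\bar D)+\tfrac\rho4+\mu_1$.

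Statement \refenum{iii} is then the Gorenstein specialisation: by the Addendum to Theorem \ref{thm: triple}, in the Gorenstein case the involution $\tau$ is fixed-point free on the preimages of the nodes of $\bar D$, so $\rho=0$, and the non-normal locus is glued purely along smooth points and nodes; here $D$ has at least one degenerate cusp or the gluing produces at least one extra point of $\chi$-defect, so $\mu_1\ge 1$ (more precisely $\tfrac\rho4+\mu_1\ge 1$ always holds for a non-normal Gorenstein $X$), which upgrades \refenum{ii} to $\chi(D)\ge 2\chi(\bar D)+1$. Finally, the equality discussion: equality in \refenum{ii} or \refenum{iii} forces $\bar\mu=-3\chi(\bar D)$, and tracing back through the genus estimate this can only happen when $p_a$ of every component of $\bar D$ is $0$ and the intersection pattern is extremal, i.e.\ $\bar D$ is a union of rational curves; counting then gives exactly $-3\chi(\bar D)$ nodes. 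The main obstacle I anticipate is the local analysis in step \refenum{i} — correctly identifying the $\chi$-contribution of each slc non-normal point (degenerate cusps and their $\IZ/2\IZ$-quotients) and verifying the precise coefficients $\tfrac14$ and $1$ — together with pinning down the inequality $\bar\mu\le -3\chi(\bar D)$ in the Cartier case, which requires invoking the structure of $\bar D$ as the boundary of a Gorenstein lc pair rather than just a general nef and big divisor.
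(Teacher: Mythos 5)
Your outline of (i) matches the paper's computation (normalisation sequences, Hurwitz for the double cover $\bar D^\nu\to D^\nu$, and the count that of the $2\bar\mu$ node preimages exactly $2\mu_2$ are $\tau$-fixed, so that $D^\nu$ has $\bar\mu+\mu_2$ points over the $\mu_1+\mu_2$ non-normal points of $D$ coming from nodes); your opening claim ``$\chi(D^\nu)=\chi(\bar D^\nu)$'' is a slip, but the parenthetical correction is the right statement. In (ii) you correctly isolate the key inequality $\bar\mu\le -3\chi(\bar D)$, but the justification you sketch is wrong: it has nothing to do with dual graphs of $\bar D$ being cycles (cycles only appear as exceptional divisors of degenerate cusps of $X$), nor with nonnegativity of $p_a(\bar D)$, nor with Lemma \ref{lem:Dred}. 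The actual mechanism is adjunction: since $K_{\bar X}+\bar D$ is ample and Cartier, $K_{\bar D}=(K_{\bar X}+\bar D)|_{\bar D}$ is ample, so $\bar D$ is a stable curve; hence every rational component of $\bar D^\nu$ carries at least three node preimages, giving $\chi(\bar D^\nu)\le \tfrac{2}{3}\bar\mu$ and therefore $\bar\mu\le -3\chi(\bar D)$, with equality exactly when all components are rational with three marked points (which is what yields the ``In addition'' clause).

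The genuine gap is in (iii), where both of your auxiliary claims are false. First, Gorenstein does not imply $\rho=0$: the Addendum to Theorem \ref{thm: triple} only makes $\tau$ fixed-point free on the preimages of the \emph{nodes} of $\bar D$; it may well have fixed points elsewhere (the Gorenstein examples in Section \ref{sect: examples} glue a smooth $\bar D$ along its hyperelliptic involution, so $\rho>0$). Second, ``$\tfrac{\rho}{4}+\mu_1\ge 1$ for every non-normal Gorenstein $X$'' is also false: take $\bar D$ smooth and $\tau$ a fixed-point free involution (e.g.\ a smooth plane quartic which is an \'etale double cover of a genus $2$ curve, type $(P)$); then $\rho=\mu_1=\mu_2=0$, yet $X$ is Gorenstein. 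So one cannot deduce (iii) from (ii) by adding a uniform positive constant. The correct argument splits into two cases: if the inequality in (ii) is strict, then $\chi(D)>2\chi(\bar D)+\tfrac\rho4+\mu_1\ge 2\chi(\bar D)$ and integrality of $\chi(D)-2\chi(\bar D)$ already gives $\chi(D)\ge 2\chi(\bar D)+1$ (this is what saves the free-involution example); if equality holds in (ii), then by the equality analysis above $\bar D$ has $-3\chi(\bar D)>0$ nodes, and fixed-point freeness of $\tau$ over the node preimages forces every node to map to a degenerate cusp rather than a quotient thereof, so $\mu_1\ge 1$. Without this dichotomy your proof of (iii) does not go through.
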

We remark that there exist examples of non-Gorenstein stable surfaces for which   the inequalities \refenum{ii}  and \refenum{iii} of Lemma  \ref{lem: chi(D)} fail.
\begin{proof}
 The curve $\bar D$ has nodes by the classification of lc pairs. Recall that Diagram \eqref{diagr: pushout} is a pushout diagram in the category of schemes. In particular, the points of $D$ correspond to equivalence classes of points on $\bar D^\nu$ with respect to the relation generated  by $x\sim y $ if $\bar \nu(x) =\bar\nu (y)$ or $\tau(x) = y$. Note that if an equivalence class contains the preimage of a node of $\bar D$ then either it contains no fixed point of $\tau$ and the image point is a degenerate cusp or it contains exactly two fixed points of $\tau$ and the image is a $\IZ/2\IZ$-quotient of a degenerate cusp. (Compare the discussion in \cite[Sect.~4.2]{liu-rollenske12} and \cite[\S 4]{ksb88}.) 
 
 Thus of the $2\bar \mu$ preimages of nodes of $\bar D$ in $\bar D^\nu$  exactly $2\mu_2$ are fixed by $\tau$ and there are exactly $\bar\mu +\mu_2$ points in $D^\nu$ that map to images of nodes in 
 $D$.
 By the normalisation sequences we have
 \begin{gather*}\chi({\bar D^\nu}) = \chi({\bar D}) + \bar \mu,\\
  \chi(D) = \chi({D^\nu})-((\bar \mu+\mu_2)-(\mu_1+\mu_2)) = \chi({D^\nu})+\mu_1-\bar \mu.
 \end{gather*}
Combining this with the Hurwitz formula for $\bar D^\nu\to D^\nu$,  which gives 
\[\chi({D^\nu}) = \frac 1 2\chi({\bar D^\nu})+\frac\rho 4, \]
we get
\[\chi(D)  = \frac 12\chi({\bar D^\nu})+\frac\rho 4+\mu_1-\bar \mu=
\frac  12\left( \chi({\bar D})-\bar \mu\right)+\frac\rho 4+\mu_1\]
 as claimed in \refenum{i}.

 Now assume in addition that $K_{\bar X}+\bar D$ is Cartier. Then, by  adjunction (see e.g. \cite[Sect.~4.1]{KollarSMMP}),
   $K_{\bar D}=(K_{\bar X}+\bar D)|_{\bar D}$ is ample, so $\bar D$ is a stable curve.
 Therefore, every rational component of the normalisation has at least three marked points mapping to nodes in $\bar D$ and thus $\chi({\bar D^\nu})\leq 2\bar \mu/3$ which implies $-\bar \mu\geq 3\chi({\bar D})$. This gives \refenum{ii} and proves the last sentence in the statement. 

 Equality in \refenum{ii} is attained if and only if  $\bar D^\nu$ consists of $-2\chi({\bar D})$  rational curves, each with three marked points; then the curve $\bar D$ has $-3\chi({\bar D})$ nodes.

In order to prove \refenum{iii}, we only need to show that if equality occurs in \refenum{ii} and $X$ is Gorenstein, then there is at least one degenerate cusp. But if equality holds in \refenum{ii}   then  $\bar D$ has $-3\chi(\bar D)>0$ nodes and, since   $X$ is Gorenstein,   each node of $\bar D$ maps to a degenerate cusp, that is,  $\mu_1>0$.
\end{proof}

\begin{thm}\label{thm: possible chi for normalisation types}
There exists a non-normal Gorenstein stable surface with normalisation of given type (as defined and classified in Section \ref{sec:pairs}) exactly in the following cases:
 \begin{center}
 \begin{tabular}{c ccc c}
 \toprule
\textrm{normalisation} & $\chi({ X})=0$& $\chi({ X})=1$& $\chi({ X})=2$& $\chi({ X})=3$\\
\midrule
$(P)$ 	& \checkmark 	& \checkmark	& \checkmark	& \checkmark\\
$(dP)$ 	&  	& \checkmark	& \checkmark	& \checkmark\\
$(E_-)$	&  	& 	& \checkmark	& \checkmark\\
$(E_+)$	&  	& \checkmark	& \checkmark	& \\
\bottomrule
\end{tabular}
\end{center}
\end{thm}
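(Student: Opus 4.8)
The plan is to prove Theorem~\ref{thm: possible chi for normalisation types} in two halves: first showing that the marked entries are impossible (the constraints), and then exhibiting an explicit construction for every checkmark (the existence). For the constraints, the main input is Proposition~\ref{prop: invariants} together with Lemma~\ref{lem: chi(D)}. By Proposition~\ref{prop: invariants} one has $\chi(X)=\chi(\bar X)+\chi(D)-\chi(\bar D)$, and by Lemma~\ref{lem: chi(D)}\refenum{iii}, since $X$ is Gorenstein, $\chi(D)\geq 2\chi(\bar D)+1$; hence
\[
\chi(X)\;\geq\;\chi(\bar X)+\chi(\bar D)+1.
\]
Now $\bar D$ is a stable curve with $K_{\bar D}=(K_{\bar X}+\bar D)|_{\bar D}$ ample and of degree $(K_{\bar X}+\bar D)^2=1$, so $p_a(\bar D)\geq 2$ and $\chi(\bar D)=1-p_a(\bar D)\leq -1$. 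Plugging in the value of $\chi(\bar X)$ for each type from Table~\ref{tab: invariants pairs} — namely $\chi(\bar X)=1$ for $(P),(dP),(E_-)$ and $\chi(\bar X)=0$ for $(E_+)$ — together with the bound $p_a(\bar D)\leq p_a(\Delta)$ taken from the same table (so $\chi(\bar D)\geq 1-p_a(\Delta)$, where $p_a(\Delta)=3$ for $(P)$ and $2$ otherwise), one reads off the lower bounds on $\chi(X)$ that rule out the blank entries. For instance in case $(E_+)$ one gets $\chi(X)\geq 0+(1-p_a(\bar D))+1\geq 0-1+1=0$, which is not yet enough; here one must argue more carefully using the full equality discussion in Lemma~\ref{lem: chi(D)} — equality forces $\bar D$ rational, contradicting $p_a(\bar D)\geq 2$ — to push $\chi(\bar D)$ down far enough, and also get the upper bound $\chi(X)\leq 2$ in that case. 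One also needs an \emph{upper} bound: since $\bar D>0$ there is always at least one component contracted or glued, and a short count with $p_a(\bar D)\geq 2$ caps $\chi(X)$; I expect $\chi(X)\leq p_g(X)+1$-type reasoning via $\chi(X)=1-q(X)+p_g(X)$ combined with an effectivity/base-point analysis of $|K_{\bar X}+\bar D|$ from the proof of Theorem~\ref{thm: pairs}.

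For the existence half, I would produce, for each checkmarked pair (type, $\chi$), an explicit triple $(\bar X,\bar D,\tau)$ satisfying the Gorenstein conditions of the Addendum to Theorem~\ref{thm: triple} — i.e., $K_{\bar X}+\bar D$ Cartier (automatic, since we start from a pair in List~\ref{list}) and $\tau$ fixed-point free on the preimages of the nodes of $\bar D$ — and then compute $\chi(X)$ from $\chi(X)=\chi(\bar X)+\chi(D)-\chi(\bar D)$ using Lemma~\ref{lem: chi(D)}\refenum{i}. The degrees of freedom are: the isomorphism type of the stable curve $\bar D$ realizing the relevant numerical class in $\bar X$ (one can choose $\bar D$ with various configurations of nodes, changing $\bar\mu$ and hence $\chi(D)$), and the choice of the involution $\tau$ on $\bar D^\nu$ (with prescribed number $\rho$ of ramification points and compatibility of the different). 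Concretely: in case $(P)$, $\bar D$ is a nodal quartic in $\IP^2$ and by varying it from a smooth quartic (after small adjustment — actually $\Delta$ must be nodal, so one takes quartics with $0,1,2,3$ nodes) through the maximally degenerate nodal quartics one sweeps out $\chi(D)$, and choosing a suitable free involution on $\bar D^\nu$ realizes all of $\chi(X)\in\{0,1,2,3\}$. For $(dP)$, $\bar D\in|-2K_{\bar X}|$ is a bielliptic-type or nodal curve of arithmetic genus $2$; for $(E_-)$, $\bar D_0\in|2(C_0+F)|$; for $(E_+)$, $\bar D\equiv 3C_0-F$. In each case I would write down one explicit curve and one explicit $\tau$ per checkmark, deferring the verification that $\tau$ preserves the different and is fixed-point-free on node-preimages to a direct local check.

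The main obstacle I anticipate is twofold. On the constraint side, the delicate point is case $(E_+)$ and the low-$\chi$ entries generally, where the crude inequality $\chi(X)\geq\chi(\bar X)+\chi(\bar D)+1$ is off by exactly the amount that the equality analysis in Lemma~\ref{lem: chi(D)} must supply; one has to track precisely when $\bar D$ can have $p_a=2$ with enough nodes, and rule out the borderline configurations — e.g., showing a genus-$2$ stable curve of the required numerical type in $S^2E$ cannot degenerate far enough to force $\chi(X)<1$ while still admitting a legal Gorenstein gluing, and symmetrically that $\chi(X)=3$ is unattainable for $(E_+)$ because that would need $\bar D$ of geometric genus $0$. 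On the existence side, the obstacle is bookkeeping: verifying that each proposed involution $\tau$ is genuinely fixed-point-free on the node-preimages and that $\Diff_{\bar D^\nu}(0)$ (a sum of preimages of nodes with coefficient $1$, by the Addendum) is $\tau$-invariant. This is routine per example but must be done consistently across all twelve checkmarks; I would organize it as one lemma per normalisation type, each handling its row of the table, and then assemble them into the proof of the theorem.

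\begin{proof}[Sketch of proof of Theorem~\ref{thm: possible chi for normalisation types}]
Let $X$ be a non-normal Gorenstein stable surface with $K_X^2=1$ and corresponding triple $(\bar X,\bar D,\tau)$; by Corollary~\ref{cor: main motivation} the pair $(\bar X,\bar D)$ is of one of the types $(P),(dP),(E_-),(E_+)$. By Proposition~\ref{prop: invariants},
\[
\chi(X)=\chi(\bar X)+\chi(D)-\chi(\bar D),
\]
and by Lemma~\ref{lem: chi(D)}\refenum{iii}, $\chi(D)\geq 2\chi(\bar D)+1$, so $\chi(X)\geq \chi(\bar X)+\chi(\bar D)+1$. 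Since $K_{\bar D}=(K_{\bar X}+\bar D)|_{\bar D}$ is ample of degree $1$, $\bar D$ is a stable curve with $p_a(\bar D)\geq 2$ and $p_a(\bar D)\leq p_a(\Delta)$ by Lemma~\ref{lem:Dred}\refenum{i}. Reading off $\chi(\bar X)$ and $p_a(\Delta)$ from Table~\ref{tab: invariants pairs} yields, in every case, a lower bound that forbids the blank entries once one invokes the equality discussion at the end of Lemma~\ref{lem: chi(D)}: equality in \refenum{iii} forces $\bar D$ to be a union of rational curves, which is incompatible with $p_a(\bar D)\geq 2$, so in fact $\chi(D)\geq 2\chi(\bar D)+2$ whenever $p_a(\bar D)=2$ with the minimal number of nodes, etc. Combined with the upper bound $\chi(X)=1-q(X)+p_g(X)$ and the bounds on $h^0(K_{\bar X}+\bar D)$ from List~\ref{list}, one obtains the stated ranges.

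Conversely, for each entry marked with a checkmark we exhibit a suitable triple. Fix $(\bar X,\bar D_0)$ of the required type as in List~\ref{list}, choose $\bar D$ in the relevant linear system with a prescribed number $\bar\mu$ of nodes, and choose a fixed-point-free involution $\tau$ on $\bar D^\nu$ with a prescribed number $\rho$ of ramification points, so that $\Diff_{\bar D^\nu}(0)$ (the reduced sum of the node-preimages, by the Addendum to Theorem~\ref{thm: triple}) is $\tau$-invariant. By Theorem~\ref{thm: triple} this data glues to a Gorenstein stable surface $X$ with $K_X^2=1$, and by Lemma~\ref{lem: chi(D)}\refenum{i},
\[
\chi(X)=\chi(\bar X)+\tfrac12\bigl(\chi(\bar D)-\bar\mu\bigr)+\tfrac\rho4+\mu_1-\chi(\bar D).
\]
Varying $\bar\mu$, $\rho$ and $\mu_1$ within the ranges allowed by the geometry of $\bar X$ realises each checkmarked value of $\chi(X)$. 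The explicit choices, together with the verification that the involutions are admissible, are carried out type by type.
\end{proof}
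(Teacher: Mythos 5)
Your overall plan coincides with the paper's: the restrictions are deduced from Proposition~\ref{prop: invariants}, Table~\ref{tab: invariants pairs} and Lemma~\ref{lem: chi(D)}, and the checkmarks are realised by explicit triples $(\bar X,\bar D,\tau)$ via Theorem~\ref{thm: triple}. However, the step you rely on to exclude the low-$\chi$ entries for $(E_\pm)$ is false. You claim that equality in part (iii) of Lemma~\ref{lem: chi(D)} ``forces $\bar D$ rational, contradicting $p_a(\bar D)\geq 2$''; but a nodal union of rational curves can have arbitrary arithmetic genus, so there is no contradiction. Indeed for type $(P)$ a quartic consisting of four general lines has $p_a=3$, only rational components and six nodes, equality in the lemma does hold, and $\chi(X)=0$ is attained (this is a checkmarked entry, settled in \cite{liu-rollenske13}); your argument would erase it, so it proves too much. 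The ingredient the paper actually uses for $(E_\pm)$ is geometric: there $\bar D$ has positive intersection with the fibres of the map to the elliptic curve $E$, hence some component of $\bar D$ dominates $E$ and cannot be rational, and it is this failure of the all-rational equality configuration that lifts the lower bound to $\chi(X)\geq 2$ for $(E_-)$ and $\chi(X)\geq 1$ for $(E_+)$. (A smaller slip: $\bar D$ \emph{is} the boundary $\Delta$ of the pair, so $p_a(\bar D)=p_a(\Delta)$ exactly; Lemma~\ref{lem:Dred} is not what is needed here.)

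Two further pieces of the statement remain unproved in your sketch. First, the upper bounds ($\chi(X)\leq 3$ in general, $\chi(X)\leq 2$ for $(E_+)$) do not follow from the ``$\chi=1-q+p_g$'' reasoning you gesture at; they come from the same identity in part (i) of Lemma~\ref{lem: chi(D)} combined with constraints on $\tau$: Hurwitz bounds the number $\rho$ of fixed points on the non-rational components (a smooth plane quartic is not hyperelliptic, a genus-$2$ curve carries at most six fixed points), and the $\tau$-invariance of $\Diff_{\bar D^\nu}(0)$ together with the fixed-point-free condition of the Addendum rules out the borderline nodal configurations; this requires an explicit case check, type by type. Second, existence is half of the theorem and you defer all of it: the paper settles it in Section~\ref{sect: examples} with concrete pairs $(\bar D,\tau)$ for every checkmark (quoting \cite{liu-rollenske13} for type $(P)$), and the delicate entries are precisely the $\chi(X)=1$ cases for $(dP)$ and $(E_+)$, which need reducible boundaries (two nodal cubics, resp.\ $E\cup E$) and involutions chosen so that the node preimages are exchanged without fixed points while the different stays invariant — none of which is exhibited or verified in your proposal.
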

One could extend the above numerical analysis to all stable surfaces with $K_X^2=1$ and Gorenstein normalisation $(\bar X, \bar D)$. From a moduli perspective such surfaces do not form a good class:  they would include some but not all $2$-Gorenstein surfaces. 
\begin{proof}
 The restrictions follow from Proposition~\ref{prop: invariants}, 
 the invariants given in Table \ref{tab: invariants pairs} and Lemma \ref{lem: chi(D)} where in the cases $(E_\pm)$ we use that not all components of $\bar D$ can be rational. 
 
 The existence of examples is settled below in Section \ref{sect: examples}.
\end{proof}
The above results allow us to refine in the case $K^2=1$ the $P_2$-inequality $\chi\ge -K^2$,  proved in  \cite{liu-rollenske13} for Gorenstein stable surfaces: 
\begin{cor}\label{cor: chi>=0} If  $X$ is a Gorenstein stable surface with $K_X^2=1$, 
then    $\chi(X)\ge 0$.
\end{cor}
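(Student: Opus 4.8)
The plan is to reduce to the two cases covered by the previous results and dispose of each. First I would observe that if $X$ is normal, then $X$ is a normal Gorenstein stable surface with $K_X^2=1$, so $(X,0)$ is a log-canonical pair with $K_X$ ample and Cartier; one then invokes the classification of the normal case (Theorem~\ref{thm:normal-case}), or more directly the Bogomolov--Miyaoka--Yau type bound $\chi(X)\ge 0$ for normal surfaces with ample Cartier canonical class; in fact for a normal surface with canonical singularities the minimal resolution is of general type with $K^2=1$, whence $\chi\ge 1>0$, and the other Kodaira dimensions are ruled out or again give $\chi\ge 0$. So the interesting case is when $X$ is non-normal.

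If $X$ is non-normal, then by Corollary~\ref{cor: main motivation} its normalisation $(\bar X,\bar D)$ is one of the four types $(P)$, $(dP)$, $(E_-)$, $(E_+)$ of Theorem~\ref{thm: pairs}. By Proposition~\ref{prop: invariants} we have $\chi(X)=\chi(\bar X)+\chi(D)-\chi(\bar D)$, and by Lemma~\ref{lem: chi(D)}\refenum{iii} (using that $X$ is Gorenstein) we get $\chi(D)\ge 2\chi(\bar D)+1$, hence
\[
\chi(X)\ \ge\ \chi(\bar X)+\chi(\bar D)+1.
\]
Now I would plug in the values from Table~\ref{tab: invariants pairs}: for $(P)$, $(dP)$, $(E_-)$ one has $\chi(\bar X)=1$, and for $(E_+)$ one has $\chi(\bar X)=0$. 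Since $\bar D$ carries the ample divisor $K_{\bar X}+\bar D$ restricted to it, $\bar D$ is a stable curve, so $\chi(\bar D)=1-p_a(\bar D)$; as $p_a(\bar D)=p_a(\Delta)\in\{2,3\}$ in all cases (Table~\ref{tab: invariants pairs}), we have $\chi(\bar D)\in\{-1,-2\}$. In the worst case $(E_+)$, $\chi(\bar X)+\chi(\bar D)+1 = 0+(-1)+1 = 0$, and in the other cases one gets at least $1+(-2)+1=0$; so $\chi(X)\ge 0$ in every case.

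The main obstacle is really bookkeeping rather than a genuine difficulty: one must be careful that Lemma~\ref{lem: chi(D)}\refenum{iii} genuinely applies (it needs $K_{\bar X}+\bar D$ Cartier, which holds precisely because $X$ is Gorenstein, via the Addendum to Theorem~\ref{thm: triple}), and one must double-check the normal case is not vacuous — but a Gorenstein stable surface with $K_X^2=1$ is irreducible by the remark following Proposition~\ref{prop: invariants}, so there is no issue with disconnectedness. A cleaner alternative that avoids the normal/non-normal split altogether is to quote the already-established Theorem~\ref{thm: possible chi for normalisation types}, which lists all realized values of $\chi(X)$ for non-normal examples and shows the minimum is $0$, and then separately handle the normal case; I would present the self-contained inequality argument above as the main line and mention this as a remark.
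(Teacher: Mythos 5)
Your proposal is correct, and its skeleton (split into the normal and non-normal cases, then use the classification) is the same as the paper's; the differences are in which results you lean on. For the non-normal case the paper simply quotes Theorem~\ref{thm: possible chi for normalisation types}, whereas you unroll the relevant part of that theorem's proof: the inequality $\chi(X)\ge \chi(\bar X)+\chi(\bar D)+1$ obtained from Proposition~\ref{prop: invariants} together with Lemma~\ref{lem: chi(D)}, part (iii), evaluated on the four types via Table~\ref{tab: invariants pairs} ($\chi(\bar D)=1-p_a(\bar D)$), is exactly the mechanism behind that theorem, and your worst cases $(P)$ and $(E_+)$ both giving the bound $0$ is right (the bound is not sharp for $(E_+)$, but that is irrelevant here). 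For the normal case the paper is more economical: it cites Blache \cite[Theorem~2]{bla94}, which directly gives $\chi(X)\ge 1$. Your alternative via Theorem~\ref{thm:normal-case} does work, but be aware of two points: you also need Lemma~\ref{lem:invariants-normal}, i.e.\ $\chi(X)=\chi(\wt X)+k$ with $k$ the number of elliptic singularities, to pass from $\chi(\wt X)\ge 0$ (which holds in every case of that theorem) to $\chi(X)\ge 0$; and your phrase that the other Kodaira dimensions are ``ruled out'' is not accurate --- all of $\kappa(\wt X)\in\{2,1,0,-\infty\}$ genuinely occur, they just all yield $\chi(X)\ge 1$. Note also that the paper's proof of the case $\kappa(\wt X)=-\infty$ of Theorem~\ref{thm:normal-case} itself invokes Blache's bound (to limit $k$), so quoting \cite{bla94} directly, as the paper does, is the cleaner route for the normal case; with that adjustment your argument and the paper's coincide in substance.
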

\begin{proof} Let $X$ be a Gorenstein stable surface with $K_X^2 =1$. 
If $X$ is normal then $\chi(X)\geq 1$ by \cite[Theorem~2]{bla94}. If $X$ is not normal then $\chi(X)\geq 0$ by Theorem \ref{thm: possible chi for normalisation types}.
\end{proof}

\subsection{Examples}\label{sect: examples}
For completeness, we now provide explicit examples for each case given in Theorem \ref{thm: possible chi for normalisation types}. We will analyse  such surfaces more systematically in  a subsequent paper.

 By Theorem \ref{thm: triple} and Corollary \ref{cor: main motivation} for each type we need to specify a (nodal) boundary $\bar D$ and an involution $\tau$ on the normalisation of $\bar D$ which induces a fixed point-free action on the preimages of the nodes. The holomorphic Euler-characteristic is then computed by Proposition \ref{prop: invariants}.

\begin{description}
 \item[ The case $(P)$] Examples  with $0\leq \chi(X)\leq 3$ are given in \cite[Sect.~5.1]{liu-rollenske13}.
\item[{The case $(dP)$}] ~
\begin{itemize}[leftmargin=*]
 \item Take $\bar D$ to be a general section in $|-2K_{\bar X}|$, which is smooth, and $\tau$ the hyperelliptic involution. This gives $\chi(X)=3$.

 \item Let $E_1, E_2\in |-K_{\bar X}|$ be two distinct smooth  isomorphic curves and fix the intersection point as a base point on both.  Let $\bar D = E_1+E_2$ and let $\tau$ be the involution that exchanges the two  curves preserving the base-point. Then $\chi(X)=2$.

\item Assume that $|-K_X|$ contains two distinct nodal plane cubics and let  $\bar D$ be their  union. The normalisation $\bar D^\nu$ consists of two copies of $\IP^1$ each with three marked points which are the preimages of the nodes of $\bar D$.

An involution on $\bar D^\nu$ interchanging the components is uniquely determined by its action on the marked points and we can choose it in such a way  that the preimage of the base-point of the pencil is not preserved by the involution (see Figure \ref{fig: construction}).
 One can easily see that this gives a rational curve of genus 2 (not nodal) as non-normal locus, thus $\chi(X)=1$.
{\scriptsize
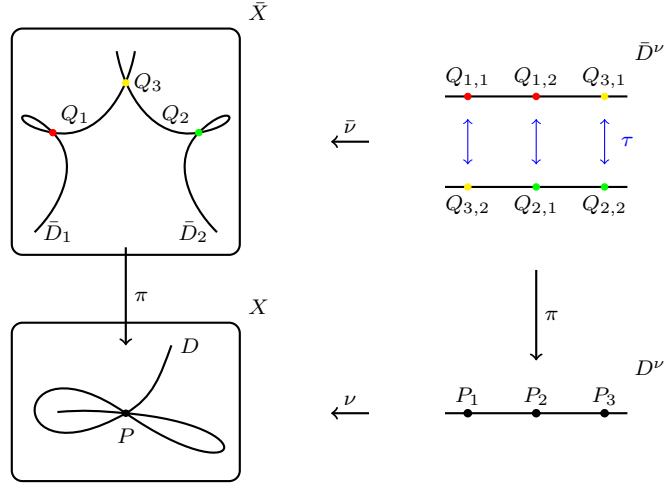
\begin{figure}\label{fig: construction}\caption{The construction of a numerical Godeaux surface with normalisation of type $(dP)$}
\begin{tikzpicture}
[curve/.style ={thick, every loop/.style={looseness=10, min distance=30}},
q1/.style={color=red},
q2/.style={color=green},
q3/.style={color=yellow},
scale = 0.6
]

\node (Q3) at (0,2.3){};
\node (Q2) at (1.6,1.2) {};
\node (Q1) at (-1.6,1.2) {};

\begin{scope}
\draw[thick, rounded corners] (-2.5, -1.5) rectangle (2.5, 3.5) node [above right] {$\bar X$};
\draw [curve] (-2,-1) node [ right] {$\bar D_1$} to[out=45, in=315] (-1.6, 1.2) to[out=135, in =170,loop] () to[out=350, in = 260] (.2,3);
\draw [curve, xscale=-1] (-2,-1)  node [left] {$\bar D_2$} to[out=45, in=315] (-1.6, 1.2) to[out=135, in =170,loop] () to[out=350, in = 260] (.2,3);

\filldraw[q3] (Q3) circle (2pt) ; \node at (Q3) [right] {$Q_3$};
\filldraw[q2] (Q2) circle (2pt);  \node at (Q2) [above left] {$Q_2$};
\filldraw[q1] (Q1) circle (2pt); \node at (Q1) [above right] {$Q_1$};

\node [corners={(-2.5, -1.5) (2.5,3.5)}] (Xbar) {};
\end{scope}

\begin{scope}[yshift=-5cm]
\draw[thick, rounded corners] (-2.5, -1.5) rectangle (2.5, 2) node [above right] {$X$};
\draw[curve,  every loop/.style={looseness=30, min distance=90}] (1,1.5) node [right]{$D$} to[out = 250, in = 30]
(0,0) to[out = 210, in = 145, loop]
() to[out = 325, in =355 ,  loop]
() to[out =175, in = 5] (179:1.5cm);

\filldraw (0,0) circle (2pt) node [below, yshift = -.1cm] {$P$};

\node [corners={(-2.5, -2) (2.5,2)}] (X) {};
\end{scope}

\begin{scope}[xshift=8cm, yshift=1cm]
\draw [curve] (3,1) -- (2.5,1) node[above] {$Q_{3,1}$} -- (1,1)  node[above] {$Q_{1,2}$} to (-0.5,1)  node[above] {$Q_{1,1}$}--(-1,1);
\filldraw [q3] (2.5,1) circle (2pt);
\filldraw[q1] (1,1) circle (2pt);
\filldraw[q1] (-0.5,1) circle (2pt);

\filldraw [curve] (3,-1)--(2.5,-1)  node[below] {$Q_{2,2}$}--(1,-1)  node[below] {$Q_{2,1}$}--(-0.5,-1)  node[below] {$Q_{3,2}$}--(-1,-1);
\filldraw [q2] (2.5,-1) circle (2pt);
\filldraw[q2] (1,-1) circle (2pt);
\filldraw[q3] (-0.5,-1) circle (2pt);

\node at (3.5, 2) {$\bar D^\nu$};

\begin{scope}[color= blue]
\draw[<->] (2.5, 0.5) -- (2.5, -0.5);
\draw[<->] (1, 0.5) -- (1, -0.5);
\draw[<->] (-.5, 0.5) -- (-.5, -0.5);
\node at (3,0) {$\tau$};
\end{scope}

\node [corners={(-1, -1.5) (3,1.5)}] (Dbarnu) {};
\end{scope}

\begin{scope}[xshift=8cm, yshift = -5cm]
\filldraw [curve] (3,0) --(2.5,0) circle (2pt) node[above] {$P_{3}$}--(1,0) circle (2pt) node[above] {$P_2$}--(-0.5,0) circle (2pt) node[above] {$P_1$}--(-1,0);

\node at (3.5, 1) {$ D^\nu$};

\node [corners={(-1, -.5) (3,.5)}] (Dnu) {};
\end{scope}

\begin{scope}
\draw [thick, ->]  (X.north) to node[right] {$\pi$} (Xbar.south);
\draw [thick, ->] (Dbarnu.west) to node[above] {$\bar\nu$} (Xbar.east);
\draw [thick, ->] (Dbarnu.south) to node[right] {$\pi$} (Dnu.north);
\draw [thick, ->] (Dnu.west) to node[above] {$\nu$} (X.east);
\end{scope}

\end{tikzpicture}
\end{figure}
}
\end{itemize}
\item[The case $(E_-)$] The divisor $\bar D$ is a curve of arithmetic genus 2, which after pullback to the minimal resolution becomes a degree 2 cover of the base curve of the projective bundle. If $\bar D$ is smooth,
 choosing as $\tau$ either the hyperelliptic involution or the involution corresponding to the double cover of the elliptic base curve gives the two possible values for $\chi(X)$.
\item[The case $(E_+)$]~
\begin{itemize}[leftmargin=*]
 \item A general $\bar D$ is a smooth curve of genus two and choosing $\tau$ to be the hyperelliptic involution we get $\chi(X) = 2$.
\item For the numerical Godeaux case let $E\isom \IC/\IZ[\I]$. Then multiplication by $1+\I$ induces an endomorphism of degree 2 on $E$, that is, an isomorphism $E\isom E/\xi$ for a particular 2-torsion element in $E$.
We can choose $\bar D \isom E\cup E/\xi \isom E\cup E$ in case $(E_+)$ (cf. \cite[\S 2]{catanese-ciliberto93})
and the intersection of the two components is a single point. Thus there is an involution $\tau$ on $\bar D$ with quotient $E$ which exchanges the two components while keeping the base-point.
With this choice $\chi(X)=1$.
\end{itemize}
\end{description}

\section{Normal Gorenstein stable surfaces with $K^2=1$}\label{section: bigandnef}

In this section we complement the results of Section \ref{sec:pairs}  by omitting the condition that the boundary should be non-empty, that is, we study Gorenstein log-canonical surfaces $X$ with $K_X$ ample and $K_X^2=1$. In the terminology of Section \ref{section: definitions} these are normal Gorenstein stable surfaces and they occur in the compactified Gieseker moduli space.

Of course, in this case  we cannot hope for a complete picture:  for instance surfaces of general type with $K^2=\chi=1$, known as Godeaux surfaces, have been an object of study for decades and a full classification has not been achieved yet.

Still, we are able to give a rough description according to  the Kodaira dimension of $\wt X$:\begin{thm}\label{thm:normal-case} Let $X$ be a normal Gorenstein stable  surface with $K^2_X=1$ and let $\epsi\colon \wt X\to X$ be its minimal desingularization. Then
\begin{enumerate}
\item If $\kappa(\wt X)=2$, then $X$ has canonical singularities.
\item If $\kappa(\wt X)=1$, then $\wt X$ is a minimal properly elliptic surface and $X$ has precisely one elliptic singularity of degree $1$.
\item If $\kappa(\wt X)=0$, denote by $X_{\min}$ the minimal model of $\wt X$. Then there exists  a nef effective  divisor  $D_{\min}$ on $X_{\min}$ and a point $P$ such that:
\begin{itemize}
\item $D^2_{\min}=2$ and $P\in D_{\min}$ has multiplicity $2$
\item  $\wt X$ is the blow-up of $X_{\min}$ at $P$
\item $X$ is obtained from $\wt X$ by blowing down the strict transform  of $D_{\min}$ and it has  either   one elliptic singularity of degree 2 or  two elliptic singularities of degree 1.
\end{itemize}
\item If $\kappa(\wt X)=-\infty$, then there are two possibilities:
\begin{itemize}
\item[(a)] $\chi(\wt X)=1$ and $\wt X$ has 1 or 2 elliptic singularities
\item[(b)] $\chi(\wt X)=0$, $\wt X$ has 1, 2 or 3  elliptic singularities; in this case, the exceptional divisors arising from the elliptic singularities are
smooth elliptic curves.
\end{itemize}
\end{enumerate}
\end{thm}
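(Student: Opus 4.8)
The plan is to argue throughout on the minimal resolution $\tilde X$, exploiting the nef and big divisor $\tilde L:=\epsi^*K_X$, which satisfies $\tilde L^2=1$. Since $X$ is Gorenstein and log-canonical the discrepancies on $\tilde X$ lie in $\{-1,0\}$, so one writes $K_{\tilde X}=\tilde L-\tilde\Delta$, where $\tilde\Delta=\sum_{j=1}^k\tilde\Delta_j$ is the reduced exceptional divisor over the $k$ non-canonical (i.e.\ elliptic Gorenstein) singularities $P_1,\dots,P_k$ of $X$; by the classification of these singularities the $\tilde\Delta_j$ are pairwise disjoint, each satisfies $p_a(\tilde\Delta_j)=1$, and $\tilde\Delta_j^2=-d_j$ with $d_j\ge 1$ the degree of $P_j$. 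As $\tilde\Delta$ is $\epsi$-exceptional one has $\tilde L\tilde\Delta=0$, hence $\tilde LK_{\tilde X}=1$, $K_{\tilde X}^2=1-\sum_j d_j$, and (comparing Euler characteristics, each $P_j$ having geometric genus $1$) $\chi(\OO_{\tilde X})=\chi(\OO_X)-k$. Two elementary facts carry most of the argument: since $\epsi$ is minimal, no $\epsi$-exceptional curve is a $(-1)$-curve; and $\tilde L\cdot D\ge 0$ for every effective divisor $D$, with equality forcing $D$ to be $\epsi$-exceptional. In particular, if $\tilde\Delta=0$ then $K_{\tilde X}=\tilde L$ is nef, so $\tilde X$ is minimal with $K^2=1$, hence of general type.

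Assume now $\kappa(\tilde X)\ge 0$, and let $g\colon\tilde X\to X_{\min}$ be the minimal model, so $K_{\tilde X}=g^*K_{X_{\min}}+\sum_{i=1}^b a_i\mathcal E_i$ with $a_i\ge 1$, the $\mathcal E_i$ the pairwise orthogonal total transforms of the blown-up points, and $\mathcal E_b$ a genuine $(-1)$-curve of $\tilde X$ when $b\ge 1$. The crucial remark is that $\alpha:=\tilde L-g^*K_{X_{\min}}=\tilde\Delta+\sum_i a_i\mathcal E_i$ is effective, so nefness of $\tilde L$ gives $0\le\tilde L\cdot\alpha=1-K_{X_{\min}}^2-K_{X_{\min}}D_{\min}$, where $D_{\min}:=g_*\tilde\Delta$. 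If $\kappa=2$, then $K_{X_{\min}}^2\ge 1$ and $K_{X_{\min}}D_{\min}\ge 0$, so all of these are equalities; then $\tilde L\cdot\alpha=0$, and since each $\tilde L\cdot\mathcal E_i\ge 0$ and $a_i\ge 1$ we get $\tilde L\cdot\mathcal E_i=0$ for all $i$, impossible for the $(-1)$-curve $\mathcal E_b$ unless $b=0$; hence $\tilde X=X_{\min}$, $\sum d_j=0$, and $X$ has canonical singularities. If $\kappa=1$, then $K_{X_{\min}}^2=0$ and $K_{X_{\min}}D_{\min}\ge 1$ — otherwise $g^*K_{X_{\min}}$ would be a nef class orthogonal to the nef and big class $\tilde L$, hence numerically trivial, against $\kappa=1$ — so again everything is an equality, $b=0$, and $K_{\tilde X}^2=0$ gives $\sum d_j=1$: a minimal properly elliptic $\tilde X$ with exactly one elliptic singularity of degree $1$. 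If $\kappa=0$, then $K_{X_{\min}}\equiv 0$, so $\tilde L\cdot\alpha=1$, i.e.\ $\sum_i a_i(\tilde L\cdot\mathcal E_i)=1$ with the $\tilde L\cdot\mathcal E_i$ non-negative integers and $a_i\ge 1$; inspecting the blow-down sequence — the $(-1)$-curve contracted at each stage cannot be $\epsi$-exceptional — forces $b=1$ and $\tilde L\cdot\mathcal E_1=1$, hence $\tilde\Delta\cdot\mathcal E_1=2$, $D_{\min}^2=2$, the blown-up point has multiplicity $2$ on $D_{\min}$, $\tilde\Delta$ is the strict transform of $D_{\min}$, $D_{\min}$ is nef, and $K_{\tilde X}^2=-1$ forces $\sum d_j=2$, i.e.\ one elliptic singularity of degree $2$ or two of degree $1$.

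Finally, suppose $\kappa(\tilde X)=-\infty$; then $\tilde\Delta\ne 0$ by the first paragraph. A minimal surface with $\kappa=-\infty$ has $K^2\in\{8,9\}$, incompatible with $K_{\tilde X}^2=1-\sum d_j\le 0$, and $\tilde X\ne\pp^2$ for the same reason; so $\tilde X$ is a blow-up of a geometrically ruled surface $\bar X\to B$. Since a fibre of $\bar X\to B$, and hence of $\tilde X\to B$, is a tree of smooth rational curves, no component of $\tilde\Delta$ can lie in a fibre; if $g(B)\ge 1$ this forces every $\tilde\Delta_j$ to be a smooth elliptic curve dominating $B$, and Riemann--Hurwitz then gives $g(B)=1$. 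Hence $q(\tilde X)=g(B)\in\{0,1\}$, i.e.\ $\chi(\OO_{\tilde X})\in\{0,1\}$, and in the case $\chi(\OO_{\tilde X})=0$ the exceptional divisors of the $P_j$ are smooth elliptic curves, as claimed in (b). The remaining assertions — $k\le 2$ when $\tilde X$ is rational and $k\le 3$ when $B$ is elliptic — follow from a direct case analysis: one runs through the possible ruled surfaces $\bar X$, the sequence of blow-ups, and the mutually disjoint arithmetic-genus-one configurations $\tilde\Delta_1,\dots,\tilde\Delta_k$ compatible with $\tilde L$ nef and big and $\tilde L^2=1$, in the same spirit as the analysis of cases $(dP)$ and $(E_\pm)$ in Section~\ref{sec:pairs} and using Lemma~\ref{lem:Dred}.

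The cases $\kappa(\tilde X)\ge 0$ come out quickly once this set-up is in place; the genuine work, and the main obstacle, is the case $\kappa(\tilde X)=-\infty$, where there is no minimal model to play against $\tilde L$ and the ruled geometry really produces several configurations — establishing the exact bounds on the number of elliptic singularities there, and checking that each configuration occurs, is where the effort goes. A second, more delicate, input used from the beginning is the classification of Gorenstein log-canonical surface singularities, which is what guarantees that $\tilde\Delta$ is reduced with arithmetic-genus-one components.
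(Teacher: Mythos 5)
Your handling of cases (i)--(iii) is sound and follows essentially the same route as the paper (playing the nef and big class $\wt L=\epsi^*K_X$ against the pullback of $K_{X_{\min}}$, using that $\epsi$-minimality makes $\wt L$ positive on irreducible $(-1)$-curves); the only small omission there is that the nefness of $D_{\min}$ is asserted rather than checked, but this is the one-line computation $\Gamma D_{\min}=\eta^*\Gamma\cdot(\wt L+E)=\eta^*\Gamma\cdot\wt L\ge 0$. The genuine gap is in case (iv): the actual content of that case is the bound on the number $k$ of elliptic singularities ($1\le k\le 2$ when $\chi(\wt X)=1$, $1\le k\le 3$ when $\chi(\wt X)=0$), and you do not prove it -- you defer it to a ``direct case analysis'' of ruled surfaces, blow-ups and disjoint arithmetic-genus-one cycles which you never carry out, and which is not obviously finite or tractable (already in the rational case one must control arbitrary blow-ups of $\pp^2$ or Hirzebruch surfaces carrying several disjoint simple elliptic or cusp cycles). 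The paper obtains these bounds with no case analysis at all, by a global numerical argument you are missing: $\chi(X)=\chi(\wt X)+k$ (Lemma~\ref{lem:invariants-normal}, since $R^1\epsi_*\OO_{\wt X}$ has length one at each elliptic point), together with $\chi(X)\ge 1$ for a normal Gorenstein stable surface \cite{bla94} and $\chi(X)\le 3$ from the stable Noether inequality for $K^2=1$ \cite{sakai80, liu-rollenske13}; combined with $k\ge 1$ (which you do have, since $\kappa(\wt X)=-\infty$ forces $\wt\Delta\neq 0$) this gives exactly the stated ranges.

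Two further slips in the same paragraph, neither fatal but worth fixing. First, a minimal surface with $\kappa=-\infty$ over a base of genus $g$ has $K^2=8(1-g)\le 0$ for $g\ge 1$, so your claim that minimality would force $K^2\in\{8,9\}$ is false; fortunately all you need is that $\wt X\neq\pp^2$ and that $\wt X$ is a (possibly trivial) blow-up of a geometrically ruled surface over $B$, which holds anyway. Second, ``no component of $\tilde\Delta$ lies in a fibre'' is more than you can justify: a cusp cycle may well contain vertical components. What is true, and suffices, is that no whole cycle $\tilde\Delta_j$ is contained in a fibre (fibres are trees of smooth rational curves, while $p_a(\tilde\Delta_j)=1$), so each $\tilde\Delta_j$ has a horizontal component; when $g(B)\ge 1$ that component has geometric genus at least $g(B)$, and the minimality property of the elliptic cycle then forces $\tilde\Delta_j$ to coincide with it, giving a smooth elliptic curve and $g(B)=1$, which is the paper's argument.
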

One can show that all cases actually occur (see for example \cite{fpr14a}).
The proof of Theorem \ref{thm:normal-case} occupies the rest of the section. We  fix set-up and notations to be kept throughout: $X$ is a normal Gorenstein stable  surface with $K^2_X=1$, $\epsi\colon \wt X\to X$ is the minimal resolution and $\wt L:=\epsi^*K_X$, so $\wt L$ is a nef and big line bundle with $\wt L^2=1$. One has $\wt L=K_{\wt X}+\wt D$, where $\wt D$ is effective and $\wt L \wt D=0$. It follows in particular that $\wt LK_{\wt X}=1$.

By the classification of normal Gorenstein lc singularities (cf. \cite[Thm. 4.21]{ksb88}), the singularities of $X$ are either canonical or elliptic.
The elliptic Gorenstein singularities are described in \cite[4.21]{reid97}: denoting by $x_1, \dots x_k\in X$ the elliptic singular points, we can  write $\wt D=\wt D_1+\dots +\wt D_k$, where  $\wt D_i$ is a divisor  supported on  $\epsi\inv (x_i)$ such that $p_a(Z)<p_a(\wt D_i)=1$ for every $0<Z<\wt D_i$. The divisors $\wt D_i$ are called the  {\em elliptic cycles} of $\wt X$.
 The degree  of the elliptic singularity $x_i$ is the positive  integer $-\wt D_i^2$.

The invariants of $X$ and $\wt X$ are related as follows:
\begin{lem}\label{lem:invariants-normal} In the above set-up:
$$p_g(X)=h^0(\wt L)\ge p_g(\wt X), \quad  q(X)\le q(\wt X) \quad \chi(X)=\chi(\wt X)+k.$$
\end{lem}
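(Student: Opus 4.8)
The plan is to prove the three relations separately, using the Leray spectral sequence (or rather the direct image sheaves) for the minimal resolution $\epsi\colon \wt X\to X$ together with the structure of Gorenstein elliptic and canonical singularities.

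First, for the statement $p_g(X)=h^0(\wt L)$ and the comparison with $p_g(\wt X)$: since $K_X$ is Cartier and $X$ is Gorenstein, duality gives $p_g(X)=h^0(X,K_X)=h^2(X,\OO_X)$. I would note that $\epsi^*K_X=\wt L=K_{\wt X}+\wt D$, so $H^0(X,K_X)=H^0(\wt X,\epsi^*K_X)=H^0(\wt X,\wt L)$ because $\epsi_*\OO_{\wt X}=\OO_X$ (normality) and the projection formula. The inequality $h^0(\wt L)\ge p_g(\wt X)=h^0(K_{\wt X})$ then follows from the inclusion $H^0(K_{\wt X})\hookrightarrow H^0(K_{\wt X}+\wt D)$ coming from the effective divisor $\wt D\ge 0$.

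Next, for $q(X)\le q(\wt X)$: here $q(X)=h^1(X,\OO_X)$ and $q(\wt X)=h^1(\wt X,\OO_{\wt X})$. I would use the Leray spectral sequence for $\epsi$ applied to $\OO_{\wt X}$, whose low-degree exact sequence reads
\[
0\to H^1(X,\OO_X)\to H^1(\wt X,\OO_{\wt X})\to H^0(X,R^1\epsi_*\OO_{\wt X})\to\cdots,
\]
using again $\epsi_*\OO_{\wt X}=\OO_X$. This immediately gives $q(X)\le q(\wt X)$.

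Finally, the Euler-characteristic relation $\chi(X)=\chi(\wt X)+k$ is the heart of the matter and the step I expect to be the main obstacle. The Leray spectral sequence gives $\chi(\wt X)=\chi(X)-h^0(X,R^1\epsi_*\OO_{\wt X})$ (the sheaf $R^1\epsi_*\OO_{\wt X}$ is supported at the finitely many singular points, so only its $H^0$ contributes, and $R^{\ge 2}\epsi_*\OO_{\wt X}=0$ for a resolution of a surface). Thus I must show $\dim_{\IC} R^1\epsi_*\OO_{\wt X}=k$, the number of elliptic singularities. For a canonical (rational) singularity $R^1\epsi_*\OO_{\wt X}=0$ by definition, so these contribute nothing. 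For a Gorenstein elliptic singularity $x_i$ one has, by the local analysis of minimally elliptic singularities (Laufer, cf. the description in \cite[4.21]{reid97} and \cite[Thm. 4.21]{ksb88} already invoked in the set-up), that the local contribution $\dim(R^1\epsi_*\OO_{\wt X})_{x_i}=1$; this is exactly the condition $p_a(Z)<p_a(\wt D_i)=1$ for $0<Z<\wt D_i$, i.e. $\wt D_i$ is the fundamental cycle and $h^1(\OO_{\wt D_i})=1$, which computes the local invariant via the formal functions theorem. Summing over the $k$ elliptic points and the (vanishing) canonical points gives $\dim R^1\epsi_*\OO_{\wt X}=k$, hence $\chi(\wt X)=\chi(X)-k$, i.e. $\chi(X)=\chi(\wt X)+k$ as claimed.
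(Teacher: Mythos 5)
Your proof is correct and follows essentially the same route as the paper: projection formula plus the inclusion $H^0(K_{\wt X})\hookrightarrow H^0(K_{\wt X}+\wt D)$ for the first relation, and the Leray spectral sequence for $\epsi$ together with the fact that $R^1\epsi_*\OO_{\wt X}$ has length $1$ at each elliptic Gorenstein point and vanishes at the canonical points for the other two. The only difference is cosmetic: the paper simply asserts the length-one statement, whereas you justify it via the minimally elliptic (Laufer/Reid) local theory, which is exactly the intended reference.
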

\begin{proof}
By the projection formula we  have $h^0(\wt L)=h^0(\epsi_*\wt L)=h^0(K_X)=p_g(X)$; in addition there is an inclusion $H^0(K_{\wt X})\hookrightarrow H^0(\wt L)$, since $\wt D$ is effective.

The remaining inequalities  follow by the 5-term exact sequence associated with the Leray spectral sequence for $\OO_{\wt X}$:
$$0\to H^1(\OO_X)\to H^1(\OO_{\wt X})\to H^0(R^1\epsi_*\OO_{\wt X})\to H^2(\OO_X)\to H^2(\OO_{\wt X})\to 0,$$
since $R^1\epsi_*\OO_{\wt X}$ has length 1 at each of the points $x_1, \dots, x_k$ and is zero elsewhere.
\end{proof}

We start by dealing with the case $\kappa(\wt X)>0$.
\begin{lem}\label{lem:kappa>0} If $\kappa(\wt X)>0$, then there are the following possibilities:
\begin{enumerate}
\item $X$ has canonical singularities
\item $\wt X$ is a minimal properly elliptic surface and $X$ has precisely one elliptic singularity of degree $1$.
\end{enumerate}
\end{lem}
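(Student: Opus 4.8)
The plan is to read everything off the numerical data already assembled: $\wt L=K_{\wt X}+\wt D$ is nef and big with $\wt L^2=\wt LK_{\wt X}=1$ and $\wt L\wt D=0$, and $\wt D=\sum_{i=1}^k\wt D_i$ is a disjoint union of elliptic cycles with $\wt D_i^2=-d_i$, $d_i\ge 1$ and (by adjunction, since $p_a(\wt D_i)=1$) $K_{\wt X}\wt D_i=d_i$. Since $\kappa(\wt X)\ge 1$ there is a contraction $\psi\colon\wt X\to X_{\min}$ to the minimal model, and I would write $K_{\wt X}=P+\Delta$ with $P:=\psi^*K_{X_{\min}}$ (nef, with $P^2=K_{X_{\min}}^2$ and $\kappa(P)=\kappa(\wt X)$) and $\Delta:=K_{\wt X}-P$ effective and supported on the $\psi$-exceptional curves. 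First I would note that $1=\wt LK_{\wt X}=\wt LP+\wt L\Delta\ge\wt LP$ (as $\wt L$ is nef and $\Delta\ge 0$), so $\wt LP\le 1$, and then the Hodge index theorem (using $\wt L^2=1>0$) gives $P^2=P^2\wt L^2\le(\wt LP)^2\le 1$, with equality in the first place forcing $P\equiv(\wt LP)\wt L$.

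If $\kappa(\wt X)=2$ this already settles things: then $P^2=K_{X_{\min}}^2\ge 1$, so $P^2=\wt LP=1$ and the equality case of Hodge index gives $P\equiv\wt L$; hence $\wt L-P=\Delta+\wt D$ is effective and numerically trivial, so $\wt D=0$, which means $\epsi$ is crepant and $X$ has only canonical singularities---case (i).

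If $\kappa(\wt X)=1$, I would first observe $\wt D\ne 0$ (otherwise $\wt L=K_{\wt X}$ would be big, contradicting $\kappa(\wt X)=1$), and then prove that $\wt X$ is minimal, i.e.\ $\Delta=0$. Here the argument would be: $P$ is not numerically trivial (as $\kappa(P)=1$) while $P^2=K_{X_{\min}}^2\ge 0$, so $P$ cannot lie in $\wt L^\perp$, where the intersection form is negative definite since $\wt L^2>0$; hence $\wt LP>0$ and $\wt L\Delta=1-\wt LP<1$. If $\Delta\ne 0$, then $\wt X$ contains a $(-1)$-curve $\Gamma$, occurring in $\Delta$ with multiplicity $\ge 1$; since $\epsi$ is the minimal resolution $\Gamma$ is not $\epsi$-exceptional, so $\wt L\Gamma=K_X\cdot\epsi_*\Gamma\ge 1$ because $K_X$ is ample, forcing $\wt L\Delta\ge\wt L\Gamma\ge 1$---a contradiction. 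So $\wt X=X_{\min}$ is minimal, hence minimal properly elliptic, and in particular $K_{\wt X}^2=0$. Finally, combining $1=\wt L^2=K_{\wt X}^2+2K_{\wt X}\wt D+\wt D^2$ with $0=\wt L\wt D=K_{\wt X}\wt D+\wt D^2$ gives $-\wt D^2=1$; as the $\wt D_i$ have disjoint supports, $\wt D^2=-\sum_i d_i$, and $k\ge 1$, $d_i\ge 1$ then force $k=1$ and $d_1=1$---case (ii).

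The main obstacle is the minimality of $\wt X$ when $\kappa(\wt X)=1$: a blow-up could in principle be buried inside an elliptic configuration, and one might expect to have to control how a $(-1)$-curve meets $\wt D$ and the elliptic fibration. What should make this painless is that none of that geometry is needed---the single inequality $\wt L\cdot\Delta<1$, weighed against the multiplicity $\ge 1$ of $\Gamma$ in $\Delta$ and the fact that a $(-1)$-curve cannot be contracted by a \emph{minimal} resolution, already closes the case. The remaining steps are routine, relying only on Hodge index, the standard decomposition $K_{\wt X}=\psi^*K_{X_{\min}}+(\text{effective }\psi\text{-exceptional})$, and the elementary facts that a nonzero class orthogonal to a big nef class has negative square and that an effective numerically trivial divisor is zero.
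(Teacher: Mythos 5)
Your proof is correct and follows essentially the same route as the paper: decompose $K_{\wt X}$ via the morphism $\eta\colon \wt X\to X_{\min}$ to the minimal model, exploit nefness of $\wt L$ together with the Hodge index theorem, rule out $(-1)$-curves in the exceptional part using that $\epsi$ is the \emph{minimal} resolution and $K_X$ is ample, and finish by the numerical count of elliptic cycles. The only differences are organizational: the paper first proves minimality of $\wt X$ uniformly for both cases (deducing $\wt L\cdot\eta^*K_{X_{\min}}=1$ from the fact that a multiple of $\eta^*K_{X_{\min}}$ moves), whereas you settle $\kappa=2$ directly through the equality case of Hodge index applied to $\eta^*K_{X_{\min}}$ and prove minimality only in the case $\kappa=1$, getting $\wt L\cdot\eta^*K_{X_{\min}}\ge 1$ from negative definiteness of $\wt L^{\perp}$ -- a harmless variation.
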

\begin{proof} Let  $\eta\colon \wt X\to X_{\min}$  be the morphism to the  minimal model. Let   $M= \eta^*K_{X_{\min}}$, so that $K_{\wt X}=M+E$, where $E$ is exceptional for $\eta$.
We have $\wt L(M+E)=\wt L K_{\wt X}=\wt L ^2=1$. Since $\wt L$ is nef and big and some multiple of $M$ moves, we have $\wt L M=1$, $\wt L E=0$. Thus, since $\tilde L$ is the pullback of an ample divisor, $E$ is also contracted by $\epsilon$. Since $\epsilon$ is assumed minimal, there is no $\epsilon$-exceptional $(-1)$-curve,  while on the other hand $\eta$ is a composition of blow-ups of a smooth surface. Hence $E=0$, namely $\wt X$ is minimal.

If $\kappa(\wt X)=2$, then the index theorem  applied to $\wt L$ and $K_{\wt X}$ gives $K^2_{\wt X}=1$ and $K_{\wt X}$ and $\wt L$ are numerically equivalent (otherwise they span a 2-dimensional subspace on which the intersection form is positive). It follows that $\wt D\geq0$ is numerically trivial, hence $\wt D=0$  and $K_{\wt X}=\epsi^* K_X$, namely $X$ has canonical singularities.

If $\kappa(\wt X)=1$, then $\wt X$ is minimal properly elliptic and $K^2_{\wt  X}=0$. It follows that $(\wt D_1+\dots + \wt D_k)K_{\wt X}=\wt DK_{\wt X}=\wt L K_{\wt X}=1$.
Since  $\wt D_i K_{\wt X}>0$ for every $i$,  we have $k=1$, namely $\wt D$ is connected and $\wt D^2=-1$.
\end{proof}

Next we consider   the case $\kappa(\wt X)=0$:
\begin{lem}\label{lem:normal-kappa0}
 If $\kappa(\wt X)=0$, then 
$X$ is  as in Theorem \ref{thm:normal-case}, \refenum{iii}.
\end{lem}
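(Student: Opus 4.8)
We work in the fixed set-up: $\kappa(\wt X)=0$, $\wt L=\epsi^*K_X$ is nef and big with $\wt L^2=1$, $\wt L=K_{\wt X}+\wt D$ with $\wt D\ge 0$, $\wt L\wt D=0$, $\wt LK_{\wt X}=1$. Let $\eta\colon \wt X\to X_{\min}$ be the contraction to the minimal model, write $K_{\wt X}=\eta^*K_{X_{\min}}+E=E$ (since $\kappa=0$ forces $K_{X_{\min}}$ numerically trivial), where $E\ge 0$ is the total exceptional divisor of $\eta$. The first step is to pin down $E$: from $1=\wt LK_{\wt X}=\wt L E$ and the fact that $\wt L$ is nef and contracts exactly the $\epsi$-exceptional curves, $E$ meets the non-contracted locus in degree $1$, so $\eta$ is a single blow-up $\wt X\to X_{\min}$ at a point $P$, with $E$ the $(-1)$-curve and $\wt L E=1$. (Here one uses minimality of $\epsi$: $E$ cannot itself be $\epsi$-exceptional, as it has $\wt L E>0$, and iterating shows $\eta$ blows up only over $P$.) Then $\wt D=\wt L-K_{\wt X}=\wt L-E$ is effective with $\wt D^2=\wt L^2-2\wt LE+E^2=1-2-1=-2$ and $\wt D E=\wt L E-E^2=1+1=2$.

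**Descending $\wt D$ to $X_{\min}$.** Next I would set $D_{\min}:=\eta_*\wt D$, so that $\wt D=\eta^*D_{\min}-mE$ where $m=\mult_P D_{\min}$ (as $E$ appears in $\wt D$ with some multiplicity; more precisely $\wt D=\eta^*D_{\min}-mE$ with $m$ the multiplicity of $D_{\min}$ at $P$ once we know $\wt D$ has no $E$-component — I address this below). Then $D_{\min}^2=\wt D^2+m^2=-2+m^2$ and $\wt DE=m$, so combining with $\wt DE=2$ gives $m=2$ and $D_{\min}^2=2$. So $D_{\min}$ is an effective divisor on $X_{\min}$ with $D_{\min}^2=2$ passing through $P$ with multiplicity $2$, and $\wt X\to X_{\min}$ is the blow-up at $P$, as claimed. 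Nefness of $D_{\min}$: since $\wt L$ is nef and $\wt L=\eta^*(K_{X_{\min}}+D_{\min})$ numerically (pushing forward: $\eta_*\wt L=\eta_*K_{\wt X}+\eta_*\wt D = K_{X_{\min}}+D_{\min}$), and $K_{X_{\min}}$ is numerically trivial, we get $D_{\min}\equiv \eta_*\wt L$ which is nef because $\wt L$ is nef and $\eta$ is birational (for any curve $\Gamma\subset X_{\min}$ not through... — actually one argues $D_{\min}\cdot\Gamma=\wt L\cdot\wt\Gamma\ge0$ where $\wt\Gamma$ is the strict transform, using $\eta^*\Gamma=\wt\Gamma+(\mult_P\Gamma)E$ and $\wt L E=1\ge0$). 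Finally $X$ is obtained from $\wt X$ by contracting the strict transform of $D_{\min}$, which is $\wt D$ (minus possible $E$-component): since $\wt L\wt D=0$ and $\wt L=\epsi^*K_X$, the divisor $\wt D$ is exactly the $\epsi$-exceptional locus, and $\wt D^2=-2$ distributes among the elliptic cycles $\wt D_i$ with $\wt D_i^2\le -1$, $\sum(-\wt D_i^2)=2$, giving either one singularity of degree $2$ or two of degree $1$.

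**The main obstacle.** The subtle point is the bookkeeping with the $(-1)$-curve $E$: a priori $E$ could be a component of $\wt D$, in which case the identification "$\wt D=$ strict transform of $D_{\min}$" and the multiplicity computation need care. I would handle this by noting $E$ is not $\epsi$-exceptional (since $\wt L E=1>0$) while $\wt D$ *is* entirely $\epsi$-exceptional (because $\wt L\wt D=0$ and $\wt L=\epsi^*K_X$ with $K_X$ ample forces every component of $\wt D$ to be $\epsi$-contracted); hence $E\not\le\wt D$, so $\wt D$ is genuinely the strict transform of $D_{\min}=\eta_*\wt D$ and $\wt D=\eta^*D_{\min}-(\mult_P D_{\min})E$, legitimising $m=\mult_P D_{\min}=2$. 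A second thing to verify is that $\wt X$ is not already minimal (i.e.\ $E\ne 0$): if $E=0$ then $K_{\wt X}\equiv 0$, contradicting $\wt LK_{\wt X}=1>0$; so indeed exactly one blow-up occurs. With these two points cleared, everything else is the elementary intersection-number arithmetic above together with Reid's description of elliptic Gorenstein singularities recalled before the lemma, so I expect the write-up to be short.
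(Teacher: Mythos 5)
Your proposal is correct and takes essentially the same route as the paper: contract to the minimal model and use $1=\wt L K_{\wt X}$ together with minimality of $\epsi$ (so $\wt L$ is positive on every irreducible $(-1)$-curve) to force a single blow-up, push $\wt D$ down to $D_{\min}=\eta_*\wt D$ with $D_{\min}^2=2$ and $\mult_P D_{\min}=2$, prove nefness via the projection formula, and read off the elliptic singularities from $\wt D^2=-2$ (the paper uses the equivalent count $\wt D K_{\wt X}=2$, the same thing by adjunction since $p_a(\wt D_i)=1$). In a final write-up you would only need to spell out the ``iterating'' step --- each blow-up in a factorization of $\eta$ produces a $(-1)$-curve of $\wt X$ appearing with coefficient at least $1$ in $K_{\wt X}$, hence contributing at least $1$ to $\wt L K_{\wt X}$, which is exactly how the paper rules out $m\ge 2$ --- and fix the harmless slip $D_{\min}\cdot\Gamma=\wt L\cdot\wt\Gamma+\mult_P\Gamma\ge 0$ rather than $\wt L\cdot\wt\Gamma$.
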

\begin{proof}
 Let  $\eta\colon \wt X\to X_{\min}$  be the morphism to the  minimal model, so $\eta$ is a composition of $m$ blow-ups in smooth points $P_1,\dots P_m$, possibly infinitely near. Denote by $E_i$ the total transform on $\wt X$ of the exceptional curve that appears at the $i$-th blow-up:  then $E_i^2=E_iK_{\wt X}=-1$, $E_iE_j=0$ if $i\ne j$, and $K_{\wt X}$ is numerically equivalent to $\sum _{i=1}^mE_i$. Observe that each $E_i$ contains at least one irreducible $(-1)$-curve.
Since  $\epsilon$ is relatively minimal,  $\tilde L$ is  positive on  irreducible 
$(-1)$-curves. Hence we have $1=\wt L K_{\wt X}=\sum_{i=1}^m\wt LE_i\ge m$, and we conclude that $m=1$, i.e., $\epsi$ is a single blow-up. We set $E=E_1$.
 
Write $\wt D=\wt D_1+\dots +\wt D_k$, with the $\wt D_i$ disjoint elliptic cycles.
We have $2=(\tilde L - K_{\tilde X})K_{\tilde X}=\wt DK_{\wt X}=\wt D_1K_{\wt X}+\dots +\wt D_kK_{\wt X}$, thus either $k=1$ and $2=\wt D_1K_{\wt X}=\wt  D_1 E$, or $k=2$ and $1=\wt D_iK_{\wt X}=\wt D_iE$, for $i=1,2$.
In the former case we have $\wt D_1^2=\wt D^2=-2$, and  in the latter  case we have $\wt D_1^2=\wt D_2^2=-1$, since $p_a(\wt D_i)=1$.

 We set $D_{\min}=\eta_*\wt D$. The divisor $D_{\min}$ has $D_{\min}^2=2$ and contains $P$ with multiplicity 2.

In order to complete the proof we need to show that $D_{\min}$ is nef. Let  $\Gamma$ be   an irreducible curve of $X_{\min}$  and write $\eta^*\Gamma=\wt \Gamma+\alpha E$, where $\wt \Gamma$ is the strict transform and $\alpha\ge 0$. We have $\Gamma D_{\min}=(\epsi^*\Gamma)(\epsi^*D_{\min})=\epsi^*\Gamma(\wt L+E)=\epsi^*\Gamma \wt L\ge 0$, since $\wt L$ is nef. 
\end{proof}

Finally we consider the case $\kappa(\wt X)=-\infty$:
\begin{lem} If  $\kappa(\wt X)=-\infty$,  then there are the following possibilities:
\begin{itemize}
\item[(a)] $\chi(\wt X)=1$ and $\wt X$ has 1 or 2 elliptic singularities
\item[(b)] $\chi(\wt X)=0$, $\wt X$ has 2 or 3  elliptic singularities and
$\wt D$ is a union of disjoint smooth elliptic curves.
\end{itemize}
\end{lem}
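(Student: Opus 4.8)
The plan is to start from the set-up established just before the statement: $\wt X$ is a smooth ruled surface (since $\kappa(\wt X)=-\infty$ and $\wt L$ is nef and big with $\wt L^2=1$, $\wt L K_{\wt X}=1$), $\wt L=K_{\wt X}+\wt D$ with $\wt D=\wt D_1+\dots+\wt D_k$ a disjoint union of elliptic cycles. First I would record that $\wt DK_{\wt X}=(\wt L-K_{\wt X})K_{\wt X}=\wt L K_{\wt X}-K_{\wt X}^2=1-K_{\wt X}^2$, so $K_{\wt X}^2=1-\sum_i\wt D_iK_{\wt X}$; since each $\wt D_iK_{\wt X}>0$ (as $\wt D_i$ is an elliptic cycle, $p_a(\wt D_i)=1$ forces $\wt D_i K_{\wt X}=-\wt D_i^2-2+2p_a(\wt D_i)\dots$ — more directly, $\wt D_i K_{\wt X}=-\wt D_i^2>0$ because the degree of the elliptic singularity is positive), we get $k\le K_{\wt X}^2+\text{(bound)}$. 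Combined with $\chi(\wt X)\le 1$ for a ruled surface, Noether's formula $K_{\wt X}^2+e(\wt X)=12\chi(\wt X)$ and the fact that $e(\wt X)\ge e(\text{minimal model})+(\text{number of blow-ups})$, one sees $K_{\wt X}^2$ can be arbitrarily negative, so a cruder bound is needed; the real constraint is that $\wt L$ must be positive on every $\epsi$-exceptional $(-1)$-curve, as in Lemmas~\ref{lem:normal-kappa0} and~\ref{lem:kappa>0}.

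Concretely, I would split on $\chi(\wt X)\in\{0,1\}$. In case $\chi(\wt X)=1$ ($\wt X$ rational), apply the argument of Lemma~\ref{lem:RE}/Lemma~\ref{lem:dP}-type bookkeeping: $h^0(K_{\wt X}+\wt L)=\chi(\wt X)+\tfrac{1+K_{\wt X}\wt L}{2}=1+\tfrac{1+1}{2}=2$, wait — here $K_{\wt X}\wt L=+1$, not $-1$, so this computation gives $h^0(K_{\wt X}+\wt L)=2$; but $K_{\wt X}+\wt L=2K_{\wt X}+\wt D$ and one studies the effective divisor $\wt D$ together with $K_{\wt X}$-data. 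The cleaner route: since $\wt D K_{\wt X}=1-K_{\wt X}^2$ and each $\wt D_iK_{\wt X}\ge 1$, we get $k\le 1-K_{\wt X}^2$, so to bound $k$ we must bound $K_{\wt X}^2$ from below, and this is exactly where $\epsi$-minimality enters: every $(-1)$-curve contracted by $\epsi$ meets $\wt L$ positively, and one shows by an explicit analysis of ruled surfaces (using the fiber class $F$ with $\wt L F=1$, as in Lemma~\ref{lem:proj} Step~1, or $\wt L F\le$ small for the rational case) that the number of blow-ups from the minimal model is at most $\chi(\wt X)+1$, hence $K_{\wt X}^2\ge -1$ when $\chi=1$ and $K_{\wt X}^2\ge$ (something like $-1$) when $\chi=0$ — giving $k\le 2$ in case (a) and the stated $k\le 3$ in case (b).

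For the extra assertion in case (b) that $\wt D$ is a disjoint union of smooth elliptic curves: when $\chi(\wt X)=0$, $\wt X$ is irrational ruled with $q(\wt X)=1$, so by Lemma~\ref{lem:proj} (whose proof only uses case $(E)$, i.e.\ $\chi(\wt X)=0$ and $h^0(K_{\wt X}+\wt L)=0$ — I must check that $h^0(K_{\wt X}+\wt L)=0$ holds here, which follows since $\chi(\wt X)=0$ forces $h^0(K_{\wt X}+\wt L)\le 0$ by \eqref{eq:K+L}) we may take $\wt X$ close to $\pp(\ke)$ over an elliptic curve $E$. Then $K_{\wt X}$ restricted to a fiber is $-2$, the elliptic cycles $\wt D_i$ are vertical (contracted by $\epsi$, hence meet $\wt L$ in $0$, hence can be arranged inside fibers of the Albanese map), and a vertical elliptic cycle on a surface ruled over an elliptic curve, being contracted to a Gorenstein elliptic singularity, must be a single smooth elliptic curve of self-intersection $-1$ (a chain or cycle of rational curves would give $q(\wt X)$ too small, or would not be $\epsi$-exceptional of the right type); the count $\sum\wt D_iK_{\wt X}=1-K_{\wt X}^2$ with each summand $=1$ then pins down $k$. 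The main obstacle will be organizing the case $\kappa=-\infty$ bookkeeping cleanly — in particular controlling $K_{\wt X}^2$ via $\epsi$-minimality across both the rational and irrational ruled cases without a long case analysis of $(-1)$-curves in fibers — and verifying that in case (b) no elliptic cycle can be a degenerate (nodal-curve) configuration, which uses the classification of Gorenstein elliptic singularities together with $q(\wt X)=1$.
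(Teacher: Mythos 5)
Your overall strategy for bounding the number $k$ of elliptic singularities does not work, and it is genuinely different from (and weaker than) what the paper does. You reduce to $k\le \wt D K_{\wt X}=1-K_{\wt X}^2$ and then try to bound $K_{\wt X}^2$ from below via a claim that the number of blow-ups from the minimal model is at most $\chi(\wt X)+1$. That claim is unproved and in fact false in this setting, and it would not even produce the bound you want: for rational $\wt X$ it would give $K_{\wt X}^2\ge 8-2=6$, which is incompatible with $K_{\wt X}^2=1-\sum_i(-\wt D_i^2)\le 0$ (each elliptic singularity has positive degree $-\wt D_i^2$, and these degrees are not bounded by this lemma, so $K_{\wt X}^2$ is not a priori $\ge -1$ or $\ge -2$). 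The blow-up-counting arguments of Lemmas \ref{lem:kappa>0} and \ref{lem:normal-kappa0} rely on $K_{X_{\min}}$ being nef or numerically trivial, and they break down completely when $\kappa(\wt X)=-\infty$. The paper bounds $k$ by an entirely different, global argument that is absent from your proposal: $\chi(X)=\chi(\wt X)+k$ (Lemma \ref{lem:invariants-normal}), $\chi(X)\ge 1$ by Blache's theorem \cite{bla94}, and $\chi(X)\le 3$ by the stable Noether inequality \cite{sakai80,liu-rollenske13}; together these give $k\le 3-\chi(\wt X)$, which is exactly the statement. Without these inputs I do not see how your line of argument can be completed.

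The second half of your proposal, on the structure of $\wt D$ when $\chi(\wt X)=0$, also has concrete errors. Your use of \eqref{eq:K+L} is invalid here: in Section \ref{sec:pairs} one has $K_{\wt X}\wt L\le -1$ because $\Delta>0$, whereas now $K_{\wt X}\wt L=+1$, so that formula gives $h^0(K_{\wt X}+\wt L)=\chi(\wt X)+1$, not $\le\chi(\wt X)$; moreover Lemma \ref{lem:proj} needs $|\wt L|\neq\emptyset$ (here $h^0(\wt L)=p_g(X)$ may vanish) and its conclusion that $\wt X$ is a $\pp^1$-bundle need not hold. More seriously, your key geometric claim — that the elliptic cycles are vertical for the Albanese fibration — is exactly backwards: every fiber of a $\pp^1$-fibration is a tree of rational curves, so any effective divisor supported in a fiber has arithmetic genus $\le 0$ and can never be an elliptic cycle. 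The paper's argument runs in the opposite direction: since no $\wt D_i$ can be contracted by the Albanese map $a\colon\wt X\to B$, each $\wt D_i$ dominates $B$, which forces $g(B)=1$, and then minimality of the elliptic cycle forces $\wt D_i$ to equal a smooth elliptic curve mapping onto $B$. Your subsequent assertion that a ``vertical'' elliptic cycle must be a smooth elliptic curve thus starts from a false premise, and your final statement that the count $\sum_i\wt D_iK_{\wt X}=1-K_{\wt X}^2$ ``pins down $k$'' again presupposes control of $K_{\wt X}^2$, which you never obtain.
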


\begin{proof}
Since $\wt X$ is ruled, we have $\chi(\wt X)\le 1$, with equality if and only if $\wt X$ is rational. 

Assume $\chi(\wt X)\le 0$ and let $a\colon X\to B$ be the Albanese map, where $B$ is a smooth curve of genus $b>0$. Write $\wt D=\wt D_1+\cdots +\wt D_k$; since the general  fiber of $a$ is a smooth rational curve and $p_a(\wt D_i)=1$ for all $i$,  no $\wt D_i$ can be contracted  to a point  by $a$, hence $\wt D_i$ dominates $B$. It follows that $b=1$ and $\wt D_i$ contains a smooth elliptic curve $D'_i$. Since $\wt D_i$ is minimal among the divisors $Z>0$  supported on $\epsi\inv(x_i)$ and such that  $p_a(Z)=1$, it follows that $\wt D_i=D'_i$.

One has $\chi(X)\ge 1$  by \cite[Theorem~2]{bla94} and $\chi(X)\le 3$ by the stable Noether inequality for normal Gorenstein stable surfaces \cite{sakai80, liu-rollenske13}. Since $k>0$, Lemma  \ref{lem:invariants-normal} gives $1\le k\le  3$ if $\chi(\wt X)=0$ and $1\le k\le  2$ if $\chi(\wt X)=1$.\end{proof}

\end{document}